\newfont{\cyrr}{wncyr10}
\newcommand{\thmref}[1]{Theorem~\ref{#1}}
\newcommand{\corref}[1]{Corollary~\ref{#1}}
\newcommand{\lemref}[1]{Lemma~\ref{#1}}
\newtheorem{thm}{Theorem}
\newtheorem{lem}[thm]{Lemma}
\newtheorem{cor}[thm]{Corollary}
\newtheorem{rmk}{Remark}[section]
\def\GL{{\rm GL}}
\def\G{{\rm G}}
\def\({\left(}
\def\){\right)}
\def\[{\left[}
\def\]{\right]}
\def\N{\mathbb{N}}
\def\Z{\mathbb{Z}}
\def\R{\mathbb{R}}
\def\C{\mathbb{C}}
\def\H{\mathbb{H}}
\def\Q{\mathbb{Q}}
\def\cN{\mathcal{N}}
\def\K{\mathbb{K}}
\def\L{\mathbb{L}}
\def\cO{\mathcal{O}}
\def\cP{\mathcal{P}}
\def\fn{\mathfrak{n}}
\def\sS{\mathscr{S}}
\def\sA{\mathscr{A}}
\def\sP{\mathscr{P}}
\def\div{~|~}
\def\a{\alpha}
\def\b{\beta}
\def\d{\delta}
\def\z{\zeta}
\def\fP{\mathfrak{P}}
\def\fp{\mathfrak{p}}
\def\w{\omega}
\def\e{\epsilon}
\newcommand{\tx}{\text}
\renewcommand{\mod}{{\, \rm mod \, }}
\let\OLDthebibliography\thebibliography
\renewcommand\thebibliography[1]{
	\OLDthebibliography{#1}
	\setlength{\parskip}{0pt}
	\setlength{\itemsep}{0pt plus 0.01ex}
}
\title{On a non-Archimedean analogue of a question of Atkin and Serre}
\author{Yuri Bilu, Sanoli Gun and Sunil Naik}
\address{Yuri ~F.~Bilu \newline
Université de Bordeaux and CNRS,
Institut de Mathématiques de Bordeaux UMR 5251,
33405, \newline
Talence, 
France.
 \newline
 \vspace{0.1cm}
Sanoli Gun and Sunil L Naik \newline
The Institute of Mathematical Sciences, 
A CI of Homi Bhabha National Institute, 
CIT Campus, Taramani, 
Chennai 600 113, 
India.}
\email{yuri@math.u-bordeaux.fr}
\email{sanoli@imsc.res.in}
\email{sunilnaik@imsc.res.in}
\begin{document}

\hfuzz 5pt

\subjclass[2010]{11F11, 11F30, 11F80, 11N56, 11N05, 11N36, 11R45}

\keywords{Largest prime factor, Fourier coefficients of Hecke eigenforms}

\begin{abstract}
In this article, we investigate a non-Archimedean analogue of a question 
of Atkin and Serre. More precisely, we derive lower bounds for the largest 
prime factor of non-zero Fourier coefficients of non-CM normalized 
cuspidal Hecke eigenforms of even weight $k \geq 2$, level $N$
with integer Fourier coefficients. In particular, we show that for such a 
form $f$ and for any real number $\e>0$, the largest prime factor 
of the $p$-th Fourier coefficient $a_f(p)$ of $f$, denoted by $P( a_f(p))$, 
satisfies
$$
P(a_f(p)) >  (\log p)^{1/8}(\log\log p)^{3/8 -\e}
$$
for almost all primes $p$. This improves on earlier bounds. 
We also investigate a number field analogue of a recent result of 
Bennett, Gherga, Patel and Siksek about the
largest prime factor of $a_f(p^m)$ for $m \ge 2$.
\end{abstract}

\maketitle

\section{Introduction and Statements of Results}
Throughout the article, let $p, q, \ell$ denote rational primes, 
$\H = \{ z \in \C : \Im(z) >0 \}$ the upper half-plane and $k \geq 2$ an even integer. 
Also let $f$ be a normalized cuspidal Hecke eigenform of 
even weight $k \geq 2$ for $\Gamma_0(N)$
with trivial character. The Fourier expansion of $f$ at infinity is given by
$$
f(z) = \sum_{n \geq 1} a_f(n) q^n~,
$$
where $q= e^{2\pi i z}$ and $z \in \H$.
It is well known that $a_f(n)$'s are real algebraic integers 
and $\K_f = \Q(\{a_f(n): n \in \N \})$ is a number field (see \cite{Sh}).
Serre (see \cite[Eq $4.11_k$]{JPS}), appealing to probabilistic considerations,
asked whether for any $\e >0$,
$$
|a_f(p)| \gg_{\e} p^{(k-3)/2 -\e}
$$
is true for a non-CM normalized Hecke eigenform $f$ of weight $k \geq 4$? 
Serre also mentioned that it was suggested to him by Atkin.  From now on, we shall refer to
it as Atkin-Serre question. 

In the present article, we consider a ``non-Archimedean'' version of this question, namely, 
what can one say about the largest prime factor of $a_f(p)$? 

For an integer $n$, let $P(n)$ denote the largest prime factor of $n$ with 
the convention that $P(0) = P(\pm 1) =1$. Also let us fix few notions of densities. 
For a subset~$S$ of primes, we shall define the lower and the upper densities of $S$ to be
$$
\liminf_{x \rightarrow \infty} \frac{\#\{p \leq x: p \in S\}}{\pi(x)}
\phantom{m}\text{and}\phantom{m}
\limsup_{x \rightarrow \infty} \frac{\#\{p \leq x: p \in S\}}{\pi(x)}
$$
respectively. Here $\pi(x)$ denotes the number of rational primes less than or equal to $x$.

If both upper and lower density of a subset $S$ of primes are equal, say to $\mathscr{D}$, 
we say that $S$ has density $\mathscr{D}$. 
We say a property $A$ holds for almost all primes if the set 
$$
\{p  :  p \text{ has property } A  \}
$$
has density one. 

Suppose that $f$ is a non-CM form with rational integer Fourier coefficients. It follows from the
work of Murty, Murty and Saradha \cite{MMS} that for any $\e>0$, we have
$$
P(a_f(p)) ~>~  e^{(\log\log p)^{1-\e}}
$$
for almost all primes $p$. Let $\tau$ denote the Ramanujan tau function defined by
\begin{equation*}
\sum_{n=1}^{\infty} \tau(n) q^n = q \prod_{n=1}^{\infty} (1-q^n)^{24}.
\end{equation*}
In \cite{LS}, Luca and Shparlinski proved that the inequality 
\begin{equation}\label{Neq1}
P(\tau(p) \tau(p^2)) > (\log p)^{\frac{33}{31} + o(1)}
\end{equation}
holds for almost all primes $p$. The exponent in the lower bound of \eqref{Neq1}
was further refined to $13/11$
by Garaev, Garcia and Konyagin \cite{GGK}, albeit, for infinitely many primes.
In this context, we prove the following theorem.
\begin{thm}\label{thm1}
Let $f$ be a non-CM normalized cuspidal Hecke eigenform of even weight $k \geq 2$ 
for $\Gamma_0(N)$ having integer Fourier coefficients $\{a_f(n) : n \in \N\}$ and 
let $\e>0$ be a real number.
Then we have
$$
P(a_f(p)) > (\log p)^{1/8}(\log\log p)^{3/8-\e}
$$
 for almost all primes $p$.
\end{thm}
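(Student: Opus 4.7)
The plan is to argue by contradiction, comparing two estimates for the weighted sum
$$
S(X) \;:=\; \sum_{p \in \mathcal{A}(X)} \log |a_f(p)|, \qquad
\mathcal{A}(X) \;:=\; \{\,p \leq X : a_f(p) \neq 0,\; P(a_f(p)) \leq y\,\},
$$
with $y := (\log X)^{1/8}(\log\log X)^{3/8-\varepsilon}$. Suppose towards contradiction that $|\mathcal{A}(X)| \geq \delta\, \pi(X)$ for some fixed $\delta > 0$ and arbitrarily large $X$. For a \emph{lower bound} on $S(X)$, Deligne gives $|a_f(p)| \leq 2 p^{(k-1)/2}$, while Sato--Tate equidistribution for non-CM holomorphic Hecke eigenforms (Barnet-Lamb--Geraghty--Harris--Taylor), combined with Serre's result that $\{p : a_f(p) = 0\}$ has density zero, yields $|a_f(p)| \geq p^{(k-1)/2 - \eta}$ on a set of primes of density arbitrarily close to $1$. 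After excluding the small exceptional set, one obtains $S(X) \gg \delta X$.

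For an \emph{upper bound}, the $y$-smoothness of $a_f(p)$ on $\mathcal{A}(X)$ gives the decomposition $\log|a_f(p)| = \sum_{\ell \leq y} v_\ell(a_f(p)) \log \ell$, so that exchanging summation yields
$$
S(X) \;\leq\; \sum_{\ell \leq y} \log\ell \sum_{m = 1}^{M_\ell} \#\{p \leq X : \ell^m \mid a_f(p)\}, \qquad M_\ell = O(\log X / \log \ell),
$$
where the truncation in $m$ comes from Deligne. For each $\ell$ and $m$, the inner count is controlled via the effective Chebotarev density theorem applied to the mod-$\ell^m$ Galois representation $\rho_{f,\ell^m} : \mathrm{Gal}(\overline{\Q}/\Q) \to \GL_2(\Z/\ell^m\Z)$ attached to $f$. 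By Ribet's big-image theorem, $\rho_{f,\ell^m}$ has maximal image for all but finitely many $\ell$, so the proportion of Frobenius classes with trace $\equiv 0 \pmod{\ell^m}$ is $\asymp 1/(\ell^{m-1}(\ell-1))$. A Lagarias--Odlyzko or log-free style Chebotarev bound (e.g.\ of Thorner--Zaman type, tailored to these representations) then gives
$$
\#\{p \leq X : \ell^m \mid a_f(p)\} \;\leq\; \frac{\pi(X)}{\ell^{m-1}(\ell-1)} + E(X; \ell^m).
$$
Summing and using Mertens, the principal contribution is $\ll \pi(X)\log y = O(X\log\log X/\log X) = o(X)$, so the error sum must absorb a quantity of order $\delta X$ for the bounds to be consistent.

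The main obstacle is therefore estimating the error term $\sum_{\ell \leq y} \log\ell \sum_m E(X; \ell^m)$ uniformly in $\ell$ throughout the range $\ell \leq y$. The splitting field of $\rho_{f,\ell^m}$ has discriminant growing as a power of $\ell^m$, so the individual errors deteriorate with $\ell$; obtaining an aggregate bound that is $o(X)$ requires a delicate trade-off between the admissible range of $\ell$ and the strength of the error bound. Accounting for exceptional small primes where $\rho_{f,\ell}$ fails to have maximal image, for the Deligne truncation $m \leq M_\ell$, and for the geometric summation in $m$ is expected to be the technical heart of the proof, with the specific exponents $1/8$ and $3/8 - \varepsilon$ emerging as the optimum of this balance.
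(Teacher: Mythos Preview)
Your overall architecture matches the paper's: argue by contradiction, bound $\sum_{p\in S(X)}\log|a_f(p)|$ from below by $\gg X$ via Sato--Tate, and from above by rewriting it as $\sum_{q}\nu_{X,q}\log q$ with $\nu_{X,q}=\sum_m \pi_f^*(X,q^m)$ and invoking Chebotarev-type input for the mod-$q^m$ representations. But your framing of the upper bound as ``main term plus error $E(X;\ell^m)$'', to be summed over all $\ell^m$ up to the Deligne cutoff $M_\ell\asymp \log X/\log\ell$, is not how the argument closes, and taken literally it fails. For $\ell^m$ beyond $(\log X)^{1/4+o(1)}$ no known unconditional Chebotarev error bound (Lagarias--Odlyzko or Thorner--Zaman) is smaller than the trivial count $\pi(X)$, so the aggregate $\sum_{\ell\le y}\sum_{m\le M_\ell}E(X;\ell^m)\log\ell$ is uncontrollable. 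Your ``principal contribution $\ll\pi(X)\log y$'' calculation implicitly assumes the density bound holds for \emph{all} $m$, which it does not.

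The paper sidesteps error terms entirely. The Thorner--Zaman input is a pure \emph{upper bound} $\pi_f(X,d)\ll\delta(d)\pi(X)$, valid only for $d\le z:=c\,(\log X)^{1/4}(\log\log X)^{-1/4}$. One splits at $m_0=\lfloor\log z/\log q\rfloor$: for $m\le m_0$ this gives $\sum_{m\le m_0}\pi_f^*(X,q^m)\ll\pi(X)/q$; for $m>m_0$ one uses nothing but the trivial monotonicity $\pi_f^*(X,q^m)\le\pi_f^*(X,q^{m_0})\ll\pi(X)/q^{m_0}$, multiplied by the $O(\log X/\log q)$ remaining values of $m$. Since $q^{m_0}\ge z/q$, this yields $\nu_{X,q}\ll (X/z)\cdot q/\log q$, and hence
\[
\sum_{q\le y}\nu_{X,q}\log q \;\ll\; \frac{X}{z}\sum_{q\le y}q \;\ll\; \frac{Xy^2}{z\log y}\;\ll\;\frac{X}{(\log\log X)^{2\varepsilon}}\,.
\]
The exponents $1/8$ and $3/8-\varepsilon$ drop out of the condition $y^2/(z\log y)=o(1)$ with $z$ as above. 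So the ``delicate trade-off'' you anticipate is not between main and error terms, but between the Thorner--Zaman validity range $z$ and the tail handled by monotonicity; identifying this split, rather than an error-term estimate, is the missing step in your sketch.
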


We say that a subset $S$ of natural numbers has natural density one if 
$$
\lim_{x \to \infty} \frac{\#\{ n \leq x : n \in S\}}{x} 
$$
exists and is equal to $1$. 
As a corollary to \thmref{thm1}, we also have the following result
which improves the lower bound proved in \cite{MMS}.
\begin{cor}\label{cafn}
Let $f$ be as in \thmref{thm1} and let $\e > 0$ be a real number. Then the set
$$
\left\{n : a_f(n) = 0 \tx{ or } P(a_f(n)) >  (\log n)^{1/8} (\log\log n)^{3/8 -\e} \right\}
$$
has natural density equal to $1$.
\end{cor}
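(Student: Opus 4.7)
The plan is to reduce the corollary to \thmref{thm1} via multiplicativity of Hecke eigenform coefficients and a sieve estimate. Fix $\e' \in (0, \e)$ and let
$$ E := \{p \text{ prime} : P(a_f(p)) \leq (\log p)^{1/8}(\log \log p)^{3/8 - \e'}\}, $$
which has density $0$ among primes by \thmref{thm1}. Set $c := \e - \e'$ and $p_0 = p_0(n) := \exp\bigl(\log n /(\log \log n)^{c}\bigr)$. Since $\log\log p_0 = \log\log n - c\log\log\log n$, a short calculation shows that for every prime $p \geq p_0$ with $p \notin E$ and $n$ sufficiently large,
$$ P(a_f(p)) \geq (\log p_0)^{1/8}(\log \log p_0)^{3/8 - \e'} > (\log n)^{1/8}(\log \log n)^{3/8 - \e}. $$

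Next I invoke multiplicativity: if $p \| n$, $p \geq p_0(n)$, and $p \notin E$, then $a_f(n) = a_f(p)\,a_f(n/p)$, so either $a_f(n) = 0$, or $a_f(p) \mid a_f(n)$ and hence $P(a_f(n)) \geq P(a_f(p)) > (\log n)^{1/8}(\log \log n)^{3/8-\e}$. Either way $n$ belongs to the set in the corollary. Therefore the exceptional set for the corollary is contained in
$$ S := \{n : \text{every prime } p \text{ with } p \| n \text{ and } p \geq p_0(n) \text{ lies in } E\}, $$
and it suffices to prove $|S \cap [1, X]| = o(X)$ as $X \to \infty$.

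Because $p_0(n)$ is eventually increasing in $n$, membership in $S$ only becomes more restrictive if we replace $p_0(n)$ by $p_0(X)$, so it suffices to bound the corresponding larger set. Let $B := \{p : p_0(X) \leq p \leq X,\ p \notin E\}$ and consider the strongly additive function $\omega_B(n) := \#\{p \in B : p \mid n\}$. By Mertens' theorem, $\sum_{p_0(X) \leq p \leq X} 1/p = c \log\log\log X + O(1)$, and since $E$ has zero density in primes, partial summation yields $\sum_{p \in E,\, p_0(X) \leq p \leq X} 1/p = o(\log\log\log X)$. Hence $S_1 := \sum_{p \in B} 1/p \sim c \log\log\log X \to \infty$. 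The Tur\'an--Kubilius inequality gives $\sum_{n \leq X}(\omega_B(n) - S_1)^2 \ll X S_1$, and Chebyshev's inequality then yields $\#\{n \leq X : \omega_B(n) = 0\} \ll X/S_1 = o(X)$. The remaining $n \in S$ must be divisible by $p^2$ for some $p \in B$, and the count of such $n$ is at most $X \sum_{p \geq p_0(X)} p^{-2} = O(X/p_0(X)) = o(X)$. Combining these estimates gives $|S \cap [1, X]| = o(X)$, whence the corollary. The main technical delicacy is the choice of $p_0$: it must be large enough that the bound from \thmref{thm1} at $p$ transfers to the corollary's threshold at $n$, yet small enough that the sum $S_1$ still diverges.
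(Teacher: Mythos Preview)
Your argument is correct and follows the same overall architecture as the paper's proof: pick a threshold $p_0(n)=\exp\bigl((\log n)/(\log\log n)^{c}\bigr)$ (the paper writes this as $n^{1/(\log\log n)^{\e}}$), show that almost every $n$ has a prime $p\mathrel{\|}n$ with $p\ge p_0(n)$ lying in the ``good'' set coming from \thmref{thm1}, and then use multiplicativity of the Hecke eigenvalues to transfer the bound from $a_f(p)$ to $a_f(n)$.

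The one genuine difference is in how you count the exceptional $n$. The paper applies Brun's sieve to the set of primes $\cP\cap(x_0,x^{1/5})$ and obtains
\[
\#\{n\le x:\ (n,p)=1\ \forall\, p\in\cP\cap(x_0,x^{1/5})\}\ \ll\ x\prod_{\substack{x_0<p<x^{1/5}\\ p\in\cP}}\Bigl(1-\tfrac1p\Bigr)\ \ll\ \frac{x}{(\log\log x)^{\eta}},
\]
a power saving in $\log\log x$. You instead use the Tur\'an--Kubilius inequality for the additive function $\omega_B$ followed by Chebyshev, which yields only $\ll X/S_1\asymp X/\log\log\log X$. Both are $o(X)$, so both suffice; your route is arguably more elementary (no sieve machinery), while the paper's gives a quantitatively stronger error term. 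One small wording slip: when you pass from $p_0(n)$ to $p_0(X)$ you say the condition becomes ``more restrictive'', but in fact it becomes \emph{less} restrictive (fewer primes are being tested), which is precisely why the resulting set is larger and can serve as an upper bound---your conclusion is right even if the phrasing is inverted.
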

Now suppose that the Generalized Riemann Hypothesis (GRH), i.e., the 
Riemann Hypothesis for all Artin L-series is true. Then as pointed out in \cite{MMS}, 
from the work of Murty and Murty \cite{MMprime}, it follows that
\begin{equation}\label{lp}
P(a_f(p)) > e^{(\log p)^{1-\e}}
\end{equation}
for almost all primes $p$. Conditionally on GRH, we prove the following result.

\begin{thm}\label{thm2}
Suppose that GRH is true and let $f$ be as in \thmref{thm1}. 
For any real valued non-negative function $g$ satisfying the property 
$g(x) \rightarrow 0$ as $x \rightarrow \infty$, we have
$$
P(a_f(p)) > p^{g(p)}
$$
for almost all primes $p$.
\end{thm}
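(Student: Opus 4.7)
\textbf{Plan for \thmref{thm2}.}
The approach is to reformulate the statement using constant exponents and then apply a sieve. For $\epsilon > 0$, define
\[
D(\epsilon) := \limsup_{x \to \infty} \frac{\#\{p \le x : P(a_f(p)) \le p^{\epsilon}\}}{\pi(x)},
\]
which is non-decreasing in $\epsilon$. Given $g$ with $g(x) \to 0$, for each $\epsilon > 0$ we have $g(p) < \epsilon$ for all sufficiently large $p$, so $\{p \le x : P(a_f(p)) \le p^{g(p)}\}$ differs by finitely many elements from a subset of $\{p \le x : P(a_f(p)) \le p^\epsilon\}$. Consequently the density of the former is at most $D(\epsilon)$ for every $\epsilon > 0$, and \thmref{thm2} reduces to showing $D(\epsilon) \to 0$ as $\epsilon \to 0^+$; in fact I shall prove $D(\epsilon) \ll \epsilon$ for small $\epsilon$.

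Fix small $\epsilon > 0$ and a constant $\delta \in (0, 1)$ to be chosen. Set $y = x^\epsilon$, $z = x^\delta$, and $P(y,z) := \prod_{y < \ell \le z}\ell$. For $x$ large, $y < z$; since a nonzero $y$-smooth integer has no prime factor in $(y, z]$,
\[
\{p \le x : 0 < P(a_f(p)) \le y\} \subseteq \{p \le x : \gcd(a_f(p), P(y,z)) = 1\}.
\]
The set $\{p : a_f(p) = 0\}$ has density zero (Serre, for non-CM forms), so it suffices to bound the right-hand side. I apply Selberg's upper bound sieve, with local densities coming from the effective Chebotarev theorem under GRH: for each squarefree $d$ with prime factors in $(y, z]$,
\[
\#\{p \le x : d \mid a_f(p)\} = \pi(x) \prod_{\ell \mid d}\frac{c_\ell}{\ell} + r_d(x),
\]
where $c_\ell \to 1$ (Serre's open image theorem ensures the mod-$\ell$ image contains $\mathrm{SL}_2(\mathbb{F}_\ell)$ for large $\ell$, so the trace-zero density is $(1 + O(1/\ell))/\ell$) and $|r_d(x)| \ll \sqrt{x}\, d^{B}\,\log(dNx)$ for some absolute $B$ (by Lagarias--Odlyzko--Serre, using $[L_d : \Q] \le d^4$ and $\log|\mathrm{disc}(L_d)| \ll d^4 \log(dN)$ for the fixed field $L_d$ of $\bar\rho_d$).

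Choose $\delta \in (0, 1/(4(B+1)))$ and Selberg level $L = x^{1/(2(B+1))}/\log^3 x$, so that $\sum_{d \le L} |r_d(x)| = o(\pi(x))$. The sieve yields $\#\{p \le x : \gcd(a_f(p), P(y,z)) = 1\} \le \pi(x)/G + o(\pi(x))$, with
\[
G = \sum_{\substack{d \le \sqrt L,\ d \mid P(y,z) \\ \mu^2(d) = 1}} \prod_{\ell \mid d}\frac{c_\ell}{\ell - c_\ell}.
\]
Setting $M := \sum_{y < \ell \le z}1/(\ell - 1) = \log(\delta/\epsilon) + O(1)$ by Mertens, the contribution to $G$ of $d$ with $\omega(d) = r$ is $\sim M^r/r!$; the constraint $d \le \sqrt L$ truncates at $r \le r_{\max} := (\log\sqrt L)/\log y \sim 1/((4B+4)\epsilon)$. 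As $\epsilon \to 0^+$, $r_{\max} \sim 1/\epsilon$ grows much faster than the peak location $r = M \sim \log(1/\epsilon)$ of the series $\sum_r M^r/r!$, so the sieve still captures the peak and $G \sim e^M \sim \delta/\epsilon$. Hence the main term is $\pi(x)(\epsilon/\delta)(1 + o(1))$, giving $D(\epsilon) \le (\epsilon/\delta)(1 + o(1)) \to 0$, as required. The hardest step is this main-term analysis in a regime where the sieving level $\sqrt L \sim x^{1/(4(B+1))}$ is far smaller than $\prod_{y<\ell\le z}\ell$; the inequality $r_{\max} \gg M$ is what makes it succeed. A secondary technical point is the careful control of the total Chebotarev error, which dictates the exponent $1/(2(B+1))$ in the choice of $L$.
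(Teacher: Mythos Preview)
Your approach is valid but differs substantially from the paper's. The paper argues by contradiction along the same lines as \thmref{thm1} and \thmref{thm3}: assuming $S_g=\{p:a_f(p)\ne 0,\ P(a_f(p))\le p^{g(p)}\}$ has positive upper density, it writes $\sum_{p\in S_g(x)}\log|a_f(p)|=\sum_q\nu_{g,x,q}\log q$, bounds the right side by $o(x)$ via \thmref{chebGRH} and \lemref{deltal} (taking $y=x^{g(x)}$, $z=y^2$), and obtains a contradiction from \lemref{lemlogfA}, whose lower bound rests on the Sato--Tate theorem. Your Selberg sieve sidesteps Sato--Tate entirely and even yields the quantitative statement $D(\epsilon)\ll\epsilon$; in exchange it needs the multiplicativity $\delta(d)=\prod_{\ell\mid d}\delta(\ell)$ for squarefree $d$ supported on large primes (a consequence of Ribet's open-image theorem, not stated explicitly here) and a more careful main-term analysis.

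That analysis contains a genuine gap. You treat $d\le\sqrt L$ as if it were the condition $\omega(d)\le r_{\max}=(\log\sqrt L)/\log y$, but the latter is only necessary: a product of $r$ primes from $(y,z]$ can be as large as $z^r$, which already exceeds $\sqrt L$ for moderate $r$ when $\delta$ is fixed. So ``$r_{\max}\gg M$'' does not by itself show that $G$ captures the peak. The correct way to see that the constraint is harmless is to control $\log d$ rather than $\omega(d)$: under the measure on $d\mid P(y,z)$ with weights $\prod_{\ell\mid d}c_\ell/(\ell-c_\ell)$ one has $E[\log d]\approx\sum_{y<\ell\le z}(\log\ell)/\ell\approx(\delta-\epsilon)\log x$ and $\mathrm{Var}[\log d]\asymp\delta^2(\log x)^2$; since the threshold $\log\sqrt L\approx\log x/(4(B+1))$ lies a fixed positive number of standard deviations above the mean once $\delta$ is a sufficiently small fixed multiple of $1/(4(B+1))$, Chebyshev's inequality retains a fixed positive fraction of $\prod_{y<\ell\le z}(1+c_\ell/(\ell-c_\ell))\asymp\delta/\epsilon$ below $\sqrt L$. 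With this fix your conclusion $G\asymp\delta/\epsilon$, and hence $D(\epsilon)\ll\epsilon$, stands.
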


\begin{rmk}
We note that \thmref{thm2} gives an improvement of \eqref{lp} if we
choose $g(x) = 1/\log\log x$ for $x \ge 3$.
\end{rmk}
We can further improve the lower bound in \thmref{thm2} for a 
subset of primes with positive lower density. More precisely, we 
prove the following.

\begin{thm}\label{thm3}
Suppose that GRH is true and let $f$ be as in \thmref{thm1}. 
There exists a positive constant $c$ depending on $f$ such that the set of primes $p$ for
which  $a_f(p) \neq 0$ and 
$$
P(a_f(p)) >  c p^{1/14}(\log p)^{2/7}
$$
has lower density at least $1-\frac{2}{13(k-1)}$.
\end{thm}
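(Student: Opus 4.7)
The plan is to prove that the set
$$
T(x, y) := \{ p \leq x : a_f(p) \neq 0,\ P(a_f(p)) \leq y \}
$$
has upper density at most $\frac{2}{13(k-1)}$, where $y = c\, p^{1/14}(\log p)^{2/7}$ with $c > 0$ depending on $f$. Together with the GRH-based bound $\#\{p \leq x : a_f(p) = 0\} \ll x^{3/4}$ of Serre (see also Murty--Murty--Saradha), this will yield the claimed lower density.

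The central input is the effective Chebotarev density theorem of Lagarias--Odlyzko under GRH, applied to the Galois representations $\rho_{\ell^m}\colon G_\Q \to \GL_2(\Z/\ell^m\Z)$ attached to $f$. Since $f$ is non-CM, Serre's open image theorem ensures that, for all but finitely many $\ell$, the image of $\rho_{\ell^m}$ contains $\mathrm{SL}_2(\Z/\ell^m\Z)$ up to bounded index; in particular the density of trace-zero elements is $\sim \ell^{-m}$. Under GRH, this yields
$$
\sum_{p \leq x} v_\ell(a_f(p)) \;=\; \frac{\pi(x)}{\ell - 1}\bigl(1 + o(1)\bigr) \;+\; E_\ell,
$$
where the error $E_\ell$ aggregates the Chebotarev errors over all $m \geq 1$: for small $m$ these are polynomial in $\ell$ under GRH, while for $m$ large enough that $\ell^m > 2 p^{(k-1)/2}$ the Deligne bound forces $a_f(p) = 0$, so that the zero-locus estimate of Serre applies.

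For $p \in T(x, y)$, $|a_f(p)|$ is a nonzero $y$-smooth integer, whence $\log|a_f(p)| = \sum_{\ell \leq y} v_\ell(a_f(p)) \log\ell$. Swapping summation and invoking the Mertens estimate $\sum_{\ell \leq y} (\log\ell)/(\ell - 1) = \log y + O(1)$,
$$
\sum_{p \in T(x, y)} \log|a_f(p)| \;\leq\; \pi(x)\log y \bigl(1 + o(1)\bigr) \;+\; \sum_{\ell \leq y}(\log\ell)\, E_\ell.
$$
The exponent $1/14$ and the log factor $(\log p)^{2/7}$ in the definition of $y$ are calibrated so that $\sum_{\ell \leq y}(\log\ell) E_\ell = o(\pi(x)\log x)$. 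Next, the primes $p \leq x^{13/14}$ contribute only $O(\pi(x^{13/14})) = o(\pi(x))$ and can be discarded. By the Sato--Tate theorem for non-CM holomorphic cuspidal eigenforms of weight $\geq 2$ (Barnet-Lamb--Geraghty--Harris--Taylor), for any fixed $\d > 0$ the set $\{p \leq x : |a_f(p)| < \d\, p^{(k-1)/2}\}$ has density at most $\mu_{\mathrm{ST}}([-\d, \d])$, which tends to $0$ as $\d \to 0$. For the remaining primes in $T(x, y)$, $\log|a_f(p)| \geq \frac{k-1}{2}\log p + O(1) \geq \frac{13(k-1)}{28}\log x + O(1)$, and combining with the upper bound yields
$$
\#\bigl\{p \in T(x,y) : p > x^{13/14},\ |a_f(p)| \geq \d\, p^{(k-1)/2}\bigr\} \;\leq\; \frac{\pi(x)\log y}{(13(k-1)/28)\log x}\bigl(1 + o(1)\bigr) \;=\; \frac{2}{13(k-1)}\,\pi(x)\bigl(1 + o(1)\bigr).
$$
Letting $\d \to 0$ yields the claimed bound on $|T(x, y)|$.

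The principal technical obstacle is calibrating $y$ and controlling the accumulated Chebotarev error $\sum_{\ell \leq y}(\log\ell) E_\ell$: under GRH the per-$\ell$ error grows polynomially in $\ell$, and the precise exponent $1/14$ together with the log factor $(\log p)^{2/7}$ is what saturates the admissible range while keeping this error $o(\pi(x)\log x)$. A secondary difficulty is the careful treatment of divisibility by higher prime powers $\ell^m$ entering the definition of $E_\ell$: one combines Chebotarev modulo $\ell^m$ for small $m$ with the reduction $\ell^m \mid a_f(p) \Rightarrow a_f(p) = 0$ (from Deligne) for $m$ large, and must show that both contributions sum to a controlled error with the chosen $y$.
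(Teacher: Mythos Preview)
Your strategy is the same as the paper's: bound $\sum_{p\in S(x)}\log|a_f(p)|$ from above by $\sum_{q\le y}\nu_{x,q}\log q$ with each $\nu_{x,q}$ controlled by the GRH Chebotarev estimate (\thmref{chebGRH}), and from below via Sato--Tate. Two points need correction.

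First, the aggregated error is \emph{not} $o(\pi(x)\log x)$. With the optimal splitting parameter $z=cx^{1/7}(\log x)^{-3/7}$ and $y=cx^{1/14}(\log x)^{2/7}$, the two dominant error terms
\[
\frac{x}{z}\cdot\frac{y^2}{\log y}
\qquad\text{and}\qquad
z^3x^{1/2}(\log x)^2\cdot\frac{y}{\log y}
\]
are each of exact order $cx$ (respectively $c^4x$); the exponents $1/14$ and $2/7$ are precisely what makes them balance at order $x$, not what makes them $o(x)$. This is the reason the constant $c$ appears in the statement. Consequently your direct inequality only yields
\[
\#\bigl\{p\in T(x,y):p>x^{13/14},\ |a_f(p)|\ge\delta p^{(k-1)/2}\bigr\}
\;\le\;\Bigl(\tfrac{2}{13(k-1)}+O(c)\Bigr)\pi(x),
\]
which for fixed $c>0$ falls just short of the theorem. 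The paper absorbs this $O(cx)$ by allowing the cruder upper bound $x/13$ and compensating with a sharper lower bound: it argues by contradiction (assume $\eta=\limsup\#S(x)/\pi(x)>\tfrac{2}{13(k-1)}$), and then uses partial summation over $p\in S\cap\{|a_f(p)|\ge p^{(k-1)/2}/M\}$ to obtain $\sum\log|a_f(p)|>\eta(1-2\e)\tfrac{k-1}{2}\,x_n$ along a subsequence, rather than discarding $p\le x^{13/14}$ and taking $\log|a_f(p)|\ge\tfrac{13(k-1)}{28}\log x$ pointwise. That recovers exactly $\tfrac{2}{13(k-1)}$.

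Second, for the intermediate range of $m$ (where $\ell^m$ is too large for Chebotarev to be useful but not yet large enough for Deligne to force $a_f(p)=0$), the paper does not invoke the zero locus at all; it simply uses the monotonicity $\pi_f^*(x,\ell^m)\le\pi_f^*(x,\ell^{m_0})$ for $m>m_0=[\log z/\log\ell]$ and multiplies by the number $\ll(\log x)/\log\ell$ of such $m$.
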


As a corollary, we have the following result.
\begin{cor}\label{cafnG}
Suppose that GRH is true and let $f$ be as in \thmref{thm1}. For any  real valued 
non-negative function $g$ satisfying the property $g(x) \to 0$ as $x \to \infty$, the set
$$
\left\{n : a_f(n) =0 \tx{ or } P(a_f(n)) > n^{g(n)}\right\}
$$
has natural density equal to $1$.
\end{cor}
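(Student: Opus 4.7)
The plan is to deduce Corollary \ref{cafnG} from Theorem \ref{thm2} via the multiplicativity of $a_f$ combined with standard smooth-number estimates, reducing the problem to bounding $\sum_{p \in E} 1/p$ over a density-zero exceptional set $E$ of primes.

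First I would reduce to the case where $g$ is non-increasing, by replacing it with $\tilde g(y) := \sup_{y' \geq y} g(y')$; this only strengthens the statement. Put $B := \{n : a_f(n) \neq 0 \text{ and } P(a_f(n)) \leq n^{g(n)}\}$, so the goal becomes $|B \cap [1,x]|/x \to 0$. Given $\epsilon > 0$, fix $\delta > 0$ small enough that $\rho(1/\delta) < \epsilon$, where $\rho$ is the Dickman function, and apply Theorem \ref{thm2} with the function $h(y) := g(y)/\delta$ (which still tends to $0$); this produces a density-zero set $E$ of primes such that for $p \notin E$ we have $a_f(p) \neq 0$ and $P(a_f(p)) > p^{g(p)/\delta}$.

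Next I would partition the integers $n \leq x$ by their largest prime factor $P(n)$. Those with $P(n) \leq x^\delta$ number at most $\Psi(x, x^\delta) = (\rho(1/\delta) + o(1))x < \epsilon x$ for $x$ large. Those with $P(n) > x^\delta$ and $P(n)^2 \mid n$ contribute at most $\sum_{p > x^\delta} x/p^2 = O(x^{1-\delta}) = o(x)$. In the remaining case $P(n) = p > x^\delta$ with $p \| n$, writing $n = pm$ with $(p,m) = 1$, multiplicativity of $a_f$ gives $P(a_f(n)) \geq P(a_f(p))$ whenever $a_f(n) \neq 0$; and for $p \notin E$, since $p > x^\delta \geq n^\delta$ (so $p^{1/\delta} > n$) and $g(p) \geq g(n)$ (as $p \leq n$ and $g$ is non-increasing), we find $P(a_f(p)) > p^{g(p)/\delta} \geq n^{g(p)} \geq n^{g(n)}$, hence $n \notin B$. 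Thus a bad $n$ in this last case forces $P(n) \in E$, contributing at most $\sum_{p \in E,\, x^\delta < p \leq x} x/p$.

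The decisive step is to show $\sum_{p \in E,\, p > x^\delta} 1/p = o(1)$ as $x \to \infty$. By Abel summation this sum equals $\int_{x^\delta}^x |E \cap [1,t]|\, t^{-2}\, dt$ up to boundary terms of size $O(1/\log x^\delta)$; for any $\eta > 0$, choosing $T_\eta$ with $|E \cap [1,t]| < \eta \pi(t)$ for all $t \geq T_\eta$ and taking $x$ so large that $x^\delta > T_\eta$, the integral is bounded above by $\eta \int_{x^\delta}^x dt/(t \log t) = \eta \log(1/\delta)$, which tends to $0$ as $\eta \to 0$. Combining the three cases yields $|B \cap [1,x]| \leq \epsilon x + o(x)$, and since $\epsilon > 0$ was arbitrary, the density of $B$ is zero. \textbf{The main obstacle} is exactly this last step: a density-zero subset of primes can still satisfy $\sum_{p \in E,\, p \leq x} 1/p$ as large as order $\log\log x$, so we must crucially exploit that we sum only over $p > x^\delta$ with $\delta$ fixed (determined by $\epsilon$), obtaining a bound that is $o(1)$ for this fixed $\delta$ as $x \to \infty$.
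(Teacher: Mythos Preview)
Your proof is correct but takes a genuinely different route from the paper's. The paper deduces Corollary~\ref{cafnG} from Theorem~\ref{thm3} rather than Theorem~\ref{thm2}: it takes the set $\cP = \{p : a_f(p) \neq 0,\ P(a_f(p)) > p^{1/14}(\log p)^{3/14}\}$ of positive lower density and applies Brun's upper-bound sieve (exactly as in the proof of Corollary~\ref{cafn}, now with $x_0 = x^{g(x)}$ and $z = x^{1/5}$) to show that almost every $n$ has some $p \in \cP$ with $p \mid\mid n$ and $p > n^{g(n)}$; the bound $P(a_f(n)) > n^{g(n)}$ then follows after an implicit rescaling of $g$ by a factor of~$14$. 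You instead invoke Theorem~\ref{thm2} with $h = g/\delta$ to produce a density-zero exceptional set $E$, replace the sieve by the smooth-number estimate $\Psi(x,x^\delta) \sim \rho(1/\delta)x$, and dispose of the residual primes via the partial-summation bound $\sum_{p \in E,\, p > x^\delta} 1/p \le (2+o(1))\eta\log(1/\delta)$ for any $\eta>0$. The tradeoff is that the paper's argument needs only a positive-density input but calls on Brun's sieve, whereas your approach requires the full density-one conclusion of Theorem~\ref{thm2} (otherwise the tail sum over $E$ would merely be bounded, not $o(1)$) but is more elementary and makes the rescaling $g \mapsto g/\delta$ explicit.
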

As in the case of prime numbers, the lower natural density of a subset $S$ of natural numbers
is defined by
$$
\liminf_{x \to \infty} \frac{\#\{ n \leq x : n \in S\}}{x}. 
$$
Here we have the following result.
\begin{cor}\label{cafnG2}
Suppose that GRH is true and let $f$ be as in \thmref{thm1}.  Then the lower natural density 
of the set
$$
\left\{n : a_f(n) =0 \tx{ or } P(a_f(n)) > n^{1/70}(\log n)^{1/7}\right\}
$$
is at least $1-e^{-c_1}$, where $c_1 = \(1-\frac{1}{6(k-1)}\) \log\(\frac{5}{4.9}\)$.
\end{cor}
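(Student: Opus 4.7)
The plan is to deduce the corollary from \thmref{thm3} via multiplicativity of $a_f$ combined with a Mertens-type sieve count on integers possessing a prime factor in a carefully chosen short window.

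Let $\mathcal{P}$ denote the set of primes furnished by \thmref{thm3}, so that the lower density of $\mathcal{P}$ is at least $1-\tfrac{2}{13(k-1)}$. Suppose $n=pm$ with $p\in\mathcal{P}$, $\gcd(p,m)=1$ and $p\ge n^{1/5}$. By multiplicativity, $a_f(n)=a_f(p)\,a_f(m)$, so either $a_f(n)=0$ (in which case $n$ is in the target set) or
$$
P(a_f(n)) \;\ge\; P(a_f(p)) \;>\; c\,p^{1/14}(\log p)^{2/7} \;\ge\; \tfrac{c}{5^{2/7}}\, n^{1/70}(\log n)^{2/7} \;>\; n^{1/70}(\log n)^{1/7}
$$
for $n$ sufficiently large. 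It therefore suffices to show that the natural density of $n\le x$ possessing a prime factor $p\in\mathcal{P}$ with $p\mid n$, $p^2\nmid n$, and $p\ge n^{1/5}$ is at least $1-e^{-c_1}$.

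Fix the short window $I_x=(x^{1/5},x^{1/4.9}]$, chosen so that $\log(5/4.9)$ emerges from Mertens. Any prime $p\in I_x$ satisfies $p\ge n^{1/5}$ for every $n\le x$, and the count of $n\le x$ divisible by $p^2$ for some $p>x^{1/5}$ is $O(x^{4/5}/\log x)=o(x)$, so the squarefreeness condition is essentially automatic. Because every $p\in I_x$ exceeds $x^{1/5}$, any product of five or more distinct primes from $I_x$ exceeds $x$; hence the M\"obius inclusion--exclusion truncates at depth four with a floor-error that is $O(x^{4/4.9}/\log^4 x)=o(x)$. Writing $\sigma_x=\sum_{p\in\mathcal{P}\cap I_x}\tfrac{1}{p}$, the truncated inclusion--exclusion together with the elementary expansion $1-\prod(1-1/p)=1-e^{-\sigma_x}+O(\sigma_x/x^{1/5})$ yields
$$
\#\{n\le x : \exists\, p\in\mathcal{P}\cap I_x,\ p\mid n\} \;=\; x\bigl(1-e^{-\sigma_x}\bigr) + o(x),
$$
reducing the problem to showing $\sigma_x\ge c_1+o(1)$.

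The essential obstacle, and the step where GRH enters in a quantitative way, is converting the \emph{global} lower-density statement on $\mathcal{P}$ to this \emph{local} short-window estimate on $\sigma_x$: a direct application of the lower density bound alone would only control $\sum_{p\le x^{1/4.9},\,p\notin\mathcal{P}}1/p$ up to order $\log\log x$, which is useless inside $I_x$. For this I would appeal to the effective form of the Chebotarev density theorem (valid under GRH) that underlies the proof of \thmref{thm3}: it yields a uniform asymptotic $\pi_{\mathcal{P}}(y)=(1-\delta_f+o(1))\pi(y)$ with some $\delta_f\le\tfrac{2}{13(k-1)}$. Partial summation over $I_x$, combined with the Mertens estimate $\sum_{p\in I_x}\tfrac{1}{p}=\log(5/4.9)+o(1)$, then produces
$$
\sigma_x \;\ge\; \Bigl(1-\tfrac{1}{6(k-1)}\Bigr)\log\!\Bigl(\tfrac{5}{4.9}\Bigr) + o(1) \;=\; c_1+o(1),
$$
the slight weakening of the constant from $\tfrac{2}{13(k-1)}$ to $\tfrac{1}{6(k-1)}$ absorbing the effective Chebotarev error term and the loss at the window endpoints. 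Combining this with the sieve estimate of the previous paragraph yields the claimed lower natural density $\ge 1-e^{-c_1}$.
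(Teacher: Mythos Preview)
Your overall strategy matches the paper's: reduce via multiplicativity to the event that $n$ has a prime $p \in \mathcal{P}$ with $p \mid\mid n$ and $p > n^{1/5}$, then sieve with the window $I_x=(x^{1/5}, x^{1/4.9}]$ and control the Mertens-type sum $\sigma_x = \sum_{p \in \mathcal{P} \cap I_x} 1/p$. Your truncated inclusion--exclusion is essentially the Brun sieve the paper invokes (at most four primes from $I_x$ can divide any $n \le x$), so there is no real difference there.

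The gap is in your treatment of $\sigma_x$. You assert that the lower-density bound on $\mathcal{P}$ is by itself insufficient and appeal instead to a ``uniform asymptotic $\pi_{\mathcal{P}}(y) = (1 - \delta_f + o(1))\pi(y)$'' extracted from effective Chebotarev. That asymptotic is not available: the set $\mathcal{P}$ is cut out by the condition $P(a_f(p)) > cp^{1/14}(\log p)^{2/7}$, which is not a Chebotarev condition, and the proof of \thmref{thm3} proceeds by contradiction, yielding only an upper-density bound on the complement, not an asymptotic for $\pi_{\mathcal{P}}$. But your worry is misplaced --- the lower-density bound alone \emph{does} suffice. With $P(t) = \#\{p \le t : p \in \mathcal{P}\}$, Abel summation gives
\[
\sigma_x \;=\; \frac{P(x^{1/4.9})}{x^{1/4.9}} - \frac{P(x^{1/5})}{x^{1/5}} + \int_{x^{1/5}}^{x^{1/4.9}} \frac{P(t)}{t^2}\,dt;
\]
the boundary terms are $O(1/\log x)$, and since $\liminf P(t)/\pi(t)\ge 1-\tfrac{2}{13(k-1)}$ means $P(t) \ge \bigl(1 - \tfrac{2}{13(k-1)} - \epsilon\bigr)\pi(t)$ for all large $t$, the integral is at least $\bigl(1 - \tfrac{2}{13(k-1)} - \epsilon\bigr)\log(5/4.9) + o(1)$. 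Since $\tfrac{1}{6} > \tfrac{2}{13}$, a suitable choice of $\epsilon$ gives $\sigma_x \ge c_1$ for large $x$. This is exactly what the paper does (``by partial summation''); no further Chebotarev input is needed at this point.
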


In a recent work, Bennett, Gherga, Patel and Siksek \cite{BGPS}
proved that for any prime $p$ with $\tau(p) \neq 0$ and 
for any $n \geq 2$, 
\begin{equation}\label{taum2}
P(\tau(p^n)) ~\ge~ C\log\log p,
\end{equation}
where $C >0$ is a constant depending on $n$.
In this context, we have the following results.
\begin{cor}\label{thpn}
Let $f$ be as in \thmref{thm1}.
Let $\e>0$ be a real number. Then for almost all primes $p$ and for all $n \ge 1$, we have
$$
P(a_f(p^{2n+1})) > (\log p)^{1/8}(\log\log p)^{3/8-\e}.
$$
\end{cor}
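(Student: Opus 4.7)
The plan is to deduce \corref{thpn} from \thmref{thm1} by establishing the divisibility $a_f(p) \mid a_f(p^{2n+1})$ in $\Z$ for every $n \geq 0$.

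To prove this divisibility, I would invoke the Hecke recurrence
$$
a_f(p^{m+1}) \;=\; a_f(p)\, a_f(p^m) \;-\; p^{k-1}\, a_f(p^{m-1})
$$
and proceed by induction on $n$. The base case $n=0$ is trivial, and the inductive step follows since, given $a_f(p) \mid a_f(p^{2n-1})$, both terms on the right with $m = 2n$ are visibly divisible by $a_f(p)$. Equivalently, writing $\alpha_p, \beta_p$ for the roots of $X^2 - a_f(p) X + p^{k-1}$, one can factor $\alpha_p^{2n+2} - \beta_p^{2n+2} = (\alpha_p^2 - \beta_p^2)\sum_{j=0}^{n} \alpha_p^{2j} \beta_p^{2(n-j)}$ and use $\alpha_p^2 - \beta_p^2 = (\alpha_p - \beta_p) a_f(p)$ to obtain
$$
a_f(p^{2n+1}) \;=\; a_f(p) \cdot \sum_{j=0}^{n} \alpha_p^{2j} \beta_p^{2(n-j)},
$$
where the second factor is symmetric in $\alpha_p, \beta_p$ and hence an integer.

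With the divisibility in hand, \thmref{thm1} furnishes a density-one set $S$ of primes on which $P(a_f(p)) > (\log p)^{1/8}(\log\log p)^{3/8-\e}$; in particular $a_f(p) \neq 0$ on $S$ for $p$ sufficiently large. For $p \in S$ and $n \geq 1$ with $a_f(p^{2n+1}) \neq 0$, the divisibility gives $P(a_f(p^{2n+1})) \geq P(a_f(p))$, which is precisely the desired bound. The only remaining issue, which I expect to be the main subtle point, is excluding primes for which $a_f(p^{2n+1}) = 0$ for some $n \geq 1$; this vanishing is equivalent to $\alpha_p/\beta_p$ being a root of unity, i.e., $\theta_p/\pi \in \Q$ in Deligne's parametrisation $\alpha_p/\beta_p = e^{2i\theta_p}$. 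For non-CM forms this is a very restrictive condition that cuts out a density-zero set of primes (containing in particular those with $a_f(p) = 0$), and I would simply absorb this set into the negligible complement of $S$ without affecting the density-one conclusion.
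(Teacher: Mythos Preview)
Your approach is exactly the paper's: establish $a_f(p)\mid a_f(p^{2n+1})$ from the Hecke recurrence, dispose of the primes where some $a_f(p^{2n+1})$ vanishes, and apply \thmref{thm1}. For the vanishing the paper simply cites \cite[Lemma~3.2]{BGPS}, \cite[Lemma~2.2]{KRW}, \cite[Lemma~2.5]{MModd}, which amount to your root-of-unity observation made precise: since $\gamma_p=\a_p/\b_p$ lies in a quadratic extension of~$\Q$, it can be a root of unity only of order $1,2,3,4$ or~$6$, forcing $a_f(p)^2/p^{k-1}\in\{0,1,2,3,4\}$ and hence (as $k$ is even) $p\le 3$ when $a_f(p)\ne 0$ --- so the exceptional set is in fact finite, not merely of density zero as you surmised.
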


\begin{cor}\label{thpn1}
Suppose that GRH is true and let $f$ be as in \thmref{thm1}.  
\begin{itemize}
\item
Then for a set of primes of lower density 
at least $1-\frac{2}{13(k-1)}$ and for all $n \ge 1$,
we have
$$
P(a_f(p^{2n+1})) >  c p^{1/14}(\log p)^{2/7}.
$$
 Here $c$ is a positive constant depending on $f$.
\item
Let $g$ be any non-negative real valued function satisfying the property 
$g(x) \rightarrow 0$ as $x \rightarrow \infty$
. Then for almost all primes $p$ and for all $n \ge 1$,
we have
$$
P(a_f(p^{2n+1})) > p^{g(p)}.
$$
\end{itemize}
\end{cor}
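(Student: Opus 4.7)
The plan is to reduce both parts of the corollary directly to \thmref{thm2} and \thmref{thm3} via a simple divisibility argument for coefficients at prime powers. The crucial algebraic input is the Hecke recursion for a normalized eigenform: for every prime $p \nmid N$ and every $m \geq 1$,
$$
a_f(p^{m+1}) ~=~ a_f(p)\, a_f(p^m) ~-~ p^{k-1}\, a_f(p^{m-1}),
$$
while for primes $p \mid N$ one simply has $a_f(p^m) = a_f(p)^m$. In either case, induction on $n$ shows that $a_f(p) \mid a_f(p^{2n+1})$ for all $n \geq 0$: the base case $n=0$ is trivial, and assuming $a_f(p) \mid a_f(p^{2n-1})$, the recursion gives $a_f(p^{2n+1}) = a_f(p)\, a_f(p^{2n}) - p^{k-1}\, a_f(p^{2n-1})$, in which both summands are divisible by $a_f(p)$.

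Consequently, for every prime $p$ (with $a_f(p) \neq 0$) and every $n \geq 1$, we have
$$
P(a_f(p^{2n+1})) ~\geq~ P(a_f(p)).
$$

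For the first bullet, I would then invoke \thmref{thm3}: under GRH, the set of primes $p$ with $a_f(p) \neq 0$ and $P(a_f(p)) > c\, p^{1/14}(\log p)^{2/7}$ has lower density at least $1 - \tfrac{2}{13(k-1)}$. Applying the divisibility inequality above, the same bound transfers to $P(a_f(p^{2n+1}))$ uniformly in $n \geq 1$. For the second bullet, I would apply \thmref{thm2} in exactly the same way: for any non-negative $g$ with $g(x) \to 0$, the bound $P(a_f(p)) > p^{g(p)}$ holds for almost all primes, and the divisibility observation immediately propagates this lower bound to $P(a_f(p^{2n+1}))$ for all $n \geq 1$ on the same set of primes.

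There is no real obstacle here beyond verifying the elementary divisibility: the corollary is a direct consequence of the two preceding theorems once one notices that $a_f(p)$ divides $a_f(p^{2n+1})$. The only minor point to mention explicitly is that the inequality is uniform in $n$, so a single density statement for the primes $p$ yields the ``for all $n \geq 1$'' conclusion without any further work.
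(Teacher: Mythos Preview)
Your approach is essentially the same as the paper's: use the Hecke recursion to get $a_f(p)\mid a_f(p^{2n+1})$, hence $P(a_f(p^{2n+1}))\ge P(a_f(p))$, and then invoke \thmref{thm2} and \thmref{thm3}. There is, however, one point you pass over too quickly. Divisibility $a_f(p)\mid a_f(p^{2n+1})$ does \emph{not} by itself give $P(a_f(p^{2n+1}))\ge P(a_f(p))$: if $a_f(p^{2n+1})=0$ then $P(a_f(p^{2n+1}))=1$ by convention, and the inequality fails. And this can happen: $a_f(p^{m})=0$ precisely when $\gamma_p=\alpha_p/\beta_p$ is an $(m+1)$-th root of unity, which is not excluded merely by $a_f(p)\neq 0$.

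The paper plugs this hole by quoting a known fact (see \cite[Lemma~3.2]{BGPS}, \cite[Lemma~2.2]{KRW}, \cite[Lemma~2.5]{MModd}): for all but finitely many primes $p$, either $a_f(p)=0$ or $a_f(p^n)\neq 0$ for every $n\ge 1$. Combined with \lemref{Z0}, this shows that the set of primes $p\nmid N$ with $a_f(p^n)\neq 0$ for all $n$ has density one, so discarding the exceptional primes does not affect either density statement. Once you add this sentence, your argument is complete and coincides with the paper's.
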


Next we generalize the result \eqref{taum2}
to normalized cuspidal Hecke eigenforms whose Fourier coefficients 
are not necessarily rational integers.  From now on, for any algebraic integer $\a$ in a number 
field $K$, we define $P(\a)$ to be the largest prime factor of the absolute norm
$\cN_{K}(\a)$ with the convention that $P(\a) =1$, if $\a = 0$ or a unit in $\cO_K$. 
With these conventions, we prove the following theorem.

\begin{thm}\label{thm6}
Let $f$ be a normalized cuspidal Hecke eigenform of even weight $k \ge 4$ 
and level $N$ with
Fourier coefficients $\{a_f(n) : n \in \N\}$. 
For any integer $n \geq 3$ and for any prime $p$ with $a_f(p^{n-1}) \neq 0$, we have
\begin{equation*}
P(a_f(p^{n-1})) ~\geq~ c(n_3, f)~ \log\log p~,
\end{equation*}
where $n_3 = \min\{ d \div n : d \geq 3\}$ and $c(n_3, f)$ is a constant depending only 
on $n_3$ and $f$.
\end{thm}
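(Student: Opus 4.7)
The plan is to adapt the argument of Bennett--Gherga--Patel--Siksek \cite{BGPS} to the number field $\K_f$. Let $\alpha_p, \beta_p$ be the Satake parameters at $p$, the roots of $X^2 - a_f(p)X + p^{k-1}$. The Hecke recursion gives
$$
a_f(p^{n-1}) = \frac{\alpha_p^n - \beta_p^n}{\alpha_p - \beta_p} = \prod_{\substack{d\mid n\\ d\geq 2}} F_d(\alpha_p,\beta_p),
$$
where $F_d(X,Y) = \prod_{\zeta\in\mu_d^{\ast}}(X - \zeta Y)$ is the $d$-th homogeneous cyclotomic polynomial. Since $F_d$ is symmetric in $X, Y$, one has $F_d(\alpha_p,\beta_p) = P_d(a_f(p),p^{k-1})$ for an explicit $P_d\in\Z[S,T]$ of degree $\varphi(d)$ in $S$ (for instance $P_3 = S^2 - T$, $P_4 = S^2 - 2T$, $P_6 = S^2 - 3T$, $P_5 = S^4 - 3S^2T + T^2$). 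By the definition of $n_3$, $F_{n_3}(\alpha_p,\beta_p)$ is a nonzero divisor of $a_f(p^{n-1})$ in $\mathcal{O}_{\K_f}$, so it suffices to show $P(F_{n_3}(\alpha_p,\beta_p)) \geq c(n_3, f)\log\log p$.

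Setting $Q := P(F_{n_3}(\alpha_p,\beta_p))$ and $\gamma := F_{n_3}(\alpha_p,\beta_p)$, the element $\gamma$ is a nonzero $S$-integer in $\mathcal{O}_{\K_f}$, where $S$ is the set of primes of $\mathcal{O}_{\K_f}$ of residue characteristic $\leq Q$. Pairing each primitive $n_3$-th root of unity $\zeta$ with its inverse, one factors $F_{n_3}(\alpha_p,\beta_p)$ over the totally real subfield $L = \K_f(\zeta_{n_3}+\zeta_{n_3}^{-1})$ into $\varphi(n_3)/2$ factors of the form $a_f(p)^2 - (2+\zeta+\zeta^{-1})\,p^{k-1}$; each such factor is $Q$-smooth in $\mathcal{O}_L$ and yields a Mordell/Thue--Mahler equation
$$
x^2 - c\cdot y^{k-1} = \gamma',
$$
with $x = a_f(p)\in\mathcal{O}_{\K_f}$, $y = p$ prime, $c = 2+\zeta+\zeta^{-1}\in\mathcal{O}_L$, and $\gamma'\in\mathcal{O}_L$ a $Q$-smooth $S$-integer.

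I would then invoke the effective theorem of Evertse--Gy\H{o}ry on $S$-integer solutions of Mordell-type equations $x^2 - cy^{k-1} = \gamma'$ over a number field, with $k-1\geq 3$. This yields, for any solution of the equation with $p$ prime,
$$
\log p \;\leq\; C(n_3, \K_f, k)\cdot\exp\bigl(c(n_3, \K_f, k)\cdot|S|\bigr).
$$
Combining with $|S| \leq [L:\Q]\cdot\pi(Q) \ll_{f,n_3} Q/\log Q$, this gives $\log\log p \ll_{f,n_3} Q$, which is the desired bound $P(F_{n_3}(\alpha_p,\beta_p))\geq c(n_3,f)\log\log p$.

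The main obstacle is to make effective and quantitatively explicit the dependence on $|S|$ in the Evertse--Gy\H{o}ry bound over the fixed number field $L$; while this is standard in spirit, careful bookkeeping is required to ensure that the implied constants depend only on $n_3$, $\K_f$, and $k$, not on $p$. A subsidiary technical issue is the case when some prime $\mathfrak{p}$ of $\mathcal{O}_{\K_f}$ above $p$ divides $a_f(p)$: then $\mathfrak{p}\mid F_{n_3}(\alpha_p,\beta_p)$, forcing $P(F_{n_3}(\alpha_p,\beta_p))\geq p$, which trivially exceeds $\log\log p$ and can be dispensed with separately.
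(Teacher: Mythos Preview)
Your approach is essentially identical to the paper's: factor $a_f(p^{n-1})$ via cyclotomic polynomials, isolate a single factor $a_f(p)^2 - \lambda_{n_3} p^{k-1}$ with $\lambda_{n_3} = 4\cos^2(\pi/n_3)$ over $L = \K_f(\zeta_{n_3}+\zeta_{n_3}^{-1})$, dispose of the case $\mathfrak{p}\mid(a_f(p),p)$ trivially, and finish with an effective Baker-type bound on the greatest prime factor of $\alpha x^2 + \beta y^{k-1}$. The only difference is the final black box: rather than routing through Evertse--Gy\H{o}ry and tracking the $|S|$-dependence by hand, the paper applies Bugeaud's theorem (\thmref{Bg}) directly, which already packages that dependence and yields $P(a_f(p)^2-\lambda_{n_3}p^{k-1}) \geq \tfrac{\log\log p}{16(k-1)d_f\varphi(n_3)}$, thereby dissolving what you flag as the main obstacle.
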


Also we derive a conditional lower bound for the largest prime factor of $a_f(p^{n-1})$
when $p$ is fixed and $n$ varies. For $p \nmid N$, let $\a_p, \b_p$ be the roots of the polynomial 
$x^2 - a_f(p)x + p^{k-1}$ and $\gamma_{p} = \a_p / \b_p$. For any
prime ideal $\fP$ of the ring of integers of $\Q(\gamma_{p})$, let $\nu_{\fP}$ denote the $\fP$-adic 
valuation. Also let $\varphi$ denote the Euler-phi function and $\omega(n)$
denote the number of distinct prime factors of $n$.

\begin{thm}\label{pafp}
Let $f$ be a normalized cuspidal Hecke eigenform of even weight $k \geq 2$ and level $N$ with
Fourier coefficients $\{a_f(n) : n \in \N\}$.  Also let $p$ be a fixed prime with $p \nmid N$. 
Suppose that $\gamma_{p}$ is not a root of unity and 
there exists a positive constant $r$ such that
\begin{equation*}
\nu_{\fP}\( \gamma_{p}^{\cN(\fP)-1}-1\) \leq r
\end{equation*}
for any prime ideal $\fP$ of the ring of integers of $\Q(\gamma_{p})$.
Then 
\begin{equation*}
P(a_f(p^{n-1})) ~>~ \frac{h(\gamma_p)}{26 rh_{f,p}} \cdot \frac{\varphi(n)^2}{(2d_fh_{f,p})^{\w(n)+1}}
\end{equation*}
for all sufficiently large $n$ depending on $f$ and $p$.
Further, when $f$ has rational integer Fourier coefficients and $p >3$, 
we have
\begin{equation*}
P(a_f(p^{n-1})) ~>~ \frac{(k-1-2\nu_{f,p})\log p}{52r} \cdot \frac{\varphi(n)^2}{2^{\omega(n)}}
\end{equation*}
for all sufficiently large $n$ (depending on $f$ and $p$).
Here $\nu_{f,p}$ is the $p$-adic valuation of $a_f(p)$, 
$d_f$ is the degree of $\K_f$ over $\Q$, $h_{f,p}$ is the class 
number of $\Q(\a_p)$ and $\cN$ is the absolute norm
on $\Q(\gamma_{p})$.
\end{thm}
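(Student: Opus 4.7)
The plan is to extract a large prime factor from the $n$-th cyclotomic piece of $a_f(p^{n-1})$. Since $p \nmid N$, the Hecke recursion yields
\begin{equation*}
a_f(p^{n-1}) \;=\; \frac{\alpha_p^n - \beta_p^n}{\alpha_p - \beta_p} \;=\; \prod_{\substack{d \mid n \\ d > 1}} \Phi_d^*(\alpha_p, \beta_p),
\end{equation*}
where $\Phi_d^*(X,Y) := Y^{\varphi(d)} \Phi_d(X/Y)$ is the $d$-th homogeneous cyclotomic polynomial (using $\sum_{d \mid n}\varphi(d)=n$). Set $K := \Q(\alpha_p)$ and $y_n := \Phi_n^*(\alpha_p, \beta_p) \in \cO_K$. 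Since $y_n$ divides $a_f(p^{n-1})$, it suffices to produce a large rational prime factor of $\cN_{K/\Q}(y_n)$.

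The argument then matches a lower bound on $\cN_{K/\Q}(y_n)$ against a pointwise upper bound on the valuations $\nu_\mathfrak{q}(y_n)$. Since $\gamma_p$ is not a root of unity, a Mahler-measure computation on $\Phi_n(\gamma_p)$, using the product formula to trade archimedean against non-archimedean places, gives
\begin{equation*}
\log \bigl| \cN_{K/\Q}(y_n) \bigr| \;\geq\; [K:\Q]\, \varphi(n)\, h(\gamma_p) \;-\; O(\log n).
\end{equation*}
For the pointwise bound, let $\mathfrak{q}$ be a prime of $\cO_{\Q(\gamma_p)}$ dividing $\Phi_n(\gamma_p)$, coprime to $n$ and to the denominator of $\gamma_p$. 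Cyclotomic theory forces the order of $\gamma_p \bmod \mathfrak{q}$ to equal $n$, so $n \mid \cN(\mathfrak{q}) - 1$; the lifting-the-exponent formula then combines with the hypothesis to produce
\begin{equation*}
\nu_\mathfrak{q}(\gamma_p^n - 1) \;\leq\; \nu_\mathfrak{q}\bigl(\gamma_p^{\cN(\mathfrak{q})-1}-1\bigr) \;\leq\; r.
\end{equation*}
Since the remaining $\Phi_d(\gamma_p)$ ($d \mid n$, $d < n$) are $\mathfrak{q}$-units, we get $\nu_\mathfrak{q}(y_n) \leq r$. The finitely many primes above $p$ or above rational divisors of $n$ are gathered and shown to contribute only $O(\log n)$ to the error, which is absorbed for $n$ sufficiently large.

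Set $P := P(a_f(p^{n-1}))$. Every primitive $\mathfrak{q} \mid y_n$ lies above some $q \leq P$ with $n \mid q^{f(\mathfrak{q}/q)} - 1$. Combining the two bounds,
\begin{equation*}
[K:\Q]\, \varphi(n)\, h(\gamma_p) \;\ll\; r \sum_{\substack{\mathfrak{q} \text{ primitive} \\ q \leq P}} \log \cN(\mathfrak{q}),
\end{equation*}
and the right-hand side is estimated by a Chebyshev/Dirichlet count of rational primes $q \leq P$ lying in at most $2^{\omega(n)}$ prescribed residue classes modulo $n$ (one per admissible value of $f(\mathfrak{q}/q)$), each class containing roughly $P/(\varphi(n)\log P)$ primes. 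Solving for $P$, together with $[K:\Q] \leq 2 d_f$ and treating a non-principal $\mathfrak{q}$ by replacing it with its $h_{f,p}$-th power to obtain a generator in $\cO_K$, yields the first bound. The second bound follows by specialising to $d_f = 1$ and computing $h(\gamma_p) = \tfrac{1}{2}(k-1-2\nu_{f,p})\log p$ directly from $v_p(\alpha_p) = \nu_{f,p}$, $v_p(\beta_p) = k-1-\nu_{f,p}$; the restriction $p > 3$ avoids LTE pathologies at the residue characteristics $2$ and $3$. The main obstacle will be this counting step: pinning down the precise constants $(2 d_f h_{f,p})^{\omega(n)+1}$ requires careful bookkeeping of residue degrees in the tower $\Q \subset \Q(\gamma_p) \subset K$, of the Chebotarev-type density of primes of prescribed order modulo $n$, and of the class-number contribution coming from non-principal primes above $p$.
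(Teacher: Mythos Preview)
Your overall architecture is the same as the paper's: lower-bound $\log|\cN(\Phi_n(\alpha_p,\beta_p))|$ via Mahler measure, upper-bound each $\nu_{\mathfrak q}$ by the Wieferich hypothesis, then count primes $q\le P$ in residue classes modulo~$n$ via Brun--Titchmarsh. But two connected steps are wrong, and they are precisely the steps that produce the constants $h_{f,p}$ and $(2d_f h_{f,p})^{\omega(n)+1}$.

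First, the claim that the primes above~$p$ ``contribute only $O(\log n)$ to the error'' is false. Any prime $\mathfrak p$ of $\Q(\alpha_p)$ above~$p$ that divides $a_f(p)$ divides both $\alpha_p$ and $\beta_p$, and then $\nu_{\mathfrak p}\bigl(\Phi_n(\alpha_p,\beta_p)\bigr)\ge \varphi(n)\min\{\nu_{\mathfrak p}(\alpha_p),\nu_{\mathfrak p}(\beta_p)\}$, which is of order $\varphi(n)$, not $O(\log n)$. This extra $\varphi(n)$-sized term does cancel against the same surplus hidden in your norm lower bound, but you have not recognised this; as written, the inequality you display does not follow.

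Second, and relatedly, your explanation of where $h_{f,p}$ enters is wrong. Principality of the primitive divisors $\mathfrak q$ is irrelevant to the counting argument: one only needs that $\mathfrak q$ sits above a rational prime $q\le P$ with $q^{f(\mathfrak q/q)}\equiv 1\pmod n$, and valuations do not care whether $\mathfrak q$ is principal. The class number appears for a different reason: the ideal $(\alpha_p,\beta_p)$ in $\cO_{\Q(\alpha_p)}$ need not be principal, so one cannot simply divide out the common factor inside $\Q(\alpha_p)$. The paper passes to an extension $\L_{f,p}/\Q(\alpha_p)$ of degree at most $h_{f,p}$ in which $(\alpha_p,\beta_p)=(\eta_p)$ becomes principal, sets $A_p=\alpha_p/\eta_p$, $B_p=\beta_p/\eta_p$, and applies the key lemma to the \emph{coprime} pair $(A_p,B_p)$. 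This enlarges the ambient field to degree $d_1\le 2d_f h_{f,p}$, and it is $d_1^{\omega(n)+1}$ (coming from the $\le 2u^{\omega(n)}$ solutions of $X^u\equiv 1\pmod n$ for $1\le u\le d_1$) that gives the factor $(2d_f h_{f,p})^{\omega(n)+1}$; the extra $h_{f,p}$ in the denominator outside the exponent comes from $d_2=[\Q(A_p,B_p):\Q(\gamma_p)]\le 2h_{f,p}$ in the bound $\nu_{\mathfrak p}\le r\,d_2$. In the rational-coefficient case one can take $\eta_p=p^{\nu_{f,p}}$ directly in $\Q(\alpha_p)$ (so $d_1=2$, $d_2=1$), and one computes $h(\gamma_p)=\tfrac12(k-1-2\nu_{f,p})\log p$ as you say. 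Once you rewrite your argument with coprime $A_p,B_p$ in the enlarged field, the rest of your plan goes through essentially as the paper does it.
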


\thmref{pafp} is a consequence of an effective number field analogue
of a result of Murty and S\'{e}guin (see \cite[Theorem 1.1]{MS}).

\begin{rmk}
Let the notations be as before. Also let $K$ be a number field, $\alpha \in K\setminus\{0\}$ 
be not a root of unity and let $\fp$ be a prime ideal of $\cO_K$ such that $\nu_{\fp}(\alpha) = 0$. 
We say that $\fp$ is Wieferich prime for $\a$ in $K$  
if $\nu_{\fp}\(\alpha^{\cN_K(\fp) -1} -1\) \geq 2$ and 
is called a super Wieferich prime for $\a$ in $K$ 
if $\nu_{\fp}\(\alpha^{\cN_K(\fp) -1} -1\) \geq 3$. Here $\cN_K$ denotes 
the absolute norm on $K$.  

Write $\a = \frac{\beta_1}{\beta_2}$, where
$\beta_1, \beta_2$ are in $\cO_K$ which are relatively prime to $\fp$.
It is easy to see that $(\beta_1^{\cN_K(\fp) -1} - \beta_2^{\cN_K(\fp) -1})\cO_K$ 
is divisible by $\fp$. We expect 
\begin{equation}\label{FQ}
\frac{(\beta_1^{\cN_K(\fp) -1} - \beta_2^{\cN_K(\fp) -1})\cO_K}{\fp}
\end{equation}
to be a random ideal in $\cO_K$. 
It follows from Tauberian theorem that the number of
ideals in $\cO_K$ with norm less than or equal to $x$ is asymptotic to
$\rho_K x$ and the number of ideals in $\cO_K$ which are divisible by $\fp$ 
and with norm less than or equal to $x$ is asymptotic to $\frac{\rho_K}{\cN_K(\fp)} x$
as $x$ tends to infinity.
Here $\rho_K$ is the residue of the Dedekind zeta function
$\zeta_K$ at $s=1$.  Thus the probability that the quantity in \eqref{FQ}
is divisible by $\fp$ is expected to be $\frac{1}{\cN_K(\fp)}$. Similarly,  the probability that
the quantity in \eqref{FQ} 
is divisible by $\fp^2$ is expected to be $\frac{1}{\cN_K(\fp)^2}$.
Since $\sum_{\fp}\frac{1}{\cN_K(\fp)^2}$ converges, it is expected that
there are only finitely many super Wieferich primes for $\alpha$ 
in $K$. Thus, we expect that $\nu_{\fp}\(\alpha^{\cN_K(\fp) -1} -1\) $ 
is bounded uniformly as we vary over prime ideals $\fp$ in $\cO_K$.
In particular, for any fixed  prime $p\nmid N $ such that 
$\gamma_{p}$ is not a root of unity, 
it is expected that $\nu_{\fP}\( \gamma_{p}^{\cN(\fP)-1}-1\)$ is 
bounded uniformly as we vary $\fP$ over prime ideals of the 
ring of integers of $\Q(\gamma_{p})$. 
When $K =\Q$, similar heuristics can be found in
\cite[Section 3]{CDP} and \cite{MS}. 

\end{rmk}

\medskip

\section{Preliminaries}

\subsection{Applications of $\ell$-adic Galois representations and 
Chebotarev density theorem}

Let $f$ be a normalized cuspidal Hecke eigenform of even weight $k \geq 2$
and level $N$ having rational integer Fourier coefficients $\{a_f(n): n \in \N \}$.
For any integer $d > 1$ and real number $x >0$, set
\begin{equation*}
\begin{split}
&\pi_f(x,d) = \#\{p \leq x : a_f(p) \equiv 0 ~(\mod d) \},\\
&\pi_f^*(x,d) = \#\{p \leq x :a_f(p) \neq 0~,~ a_f(p) \equiv 0 ~(\mod d) \}.
\end{split}
\end{equation*}
Let $\G  = \text{Gal}\(\overline{\Q}/\Q\)$ and for a prime $\ell$, let $\Z_\ell$ 
denote the ring of $\ell$-adic integers. By the work of Deligne \cite{De}, 
there exists a continuous representation
\begin{equation*}
\rho_{d,f} : {\G} \rightarrow {\GL}_2\(\prod_{\ell | d} \Z_\ell\)
\end{equation*}
which is unramified outside $dN$. Further, if $p \nmid dN$, then 
$$
\text{tr}\rho_{d,f}(\sigma_p) = a_f(p) \phantom{mm}\text{and}\phantom{mm} 
\text{det}\rho_{d,f}(\sigma_p) = p^{k-1},
$$
where $\sigma_p$ is a Frobenious element of $p$ in $\G$ and $\Z$ is embedded 
diagonally in $\prod_{\ell | d} \Z_\ell$. 
Denote by $\tilde{\rho}_{d,f}$ the reduction  of $\rho_{d,f}$ modulo $d$:
\begin{equation*}
\tilde{\rho}_{d,f} : {\G} \xrightarrow[]{\rho_{d,f}} {\GL}_2\(\prod_{\ell | d} \Z_\ell\)
\twoheadrightarrow {\GL}_2(\Z/ d\Z).
\end{equation*}
Suppose that $H_d$ is the kernel of $\tilde{\rho}_{d,f}$, $K_d$ is the subfield of 
$\overline{\Q}$ fixed by $H_d$ and 
${\G}_d = \text{Gal}(K_d/\Q) \cong \text{Im}\(\tilde{\rho}_{d,f}\)$. Further
suppose that $C_d$ is the subset of $\tilde{\rho}_{d,f}(\G)$ consisting of elements of trace 
zero. Let us set $\delta(d) = \frac{|C_d|}{|{\G}_d|}$.
For any prime $p\nmid dN$, the condition $a_f(p) \equiv 0 ~(\mod d)$ is equivalent to
 the fact that $\tilde{\rho}_{d,f}(\sigma_p) \in C_d$, where $\sigma_{p}$ is a Frobenious 
element of $p$ in $\G$. Hence 
by the Chebotarev density theorem applied to $K_d / \Q$, we have
\begin{equation*}
\pi_f(x,d) \sim \frac{|C_d|}{|{\G}_d|}\pi(x) = \delta(d)\pi(x)
\phantom{m}\text{as } x \to \infty.
\end{equation*}
From the works of Lagarias and Odlyzko \cite{LO} (see also \cite[Proof of Theorem 6.2]{MMS}), 
we can deduce the following lemma.
\begin{lem}\label{cheb}
Let $f$ be a non-CM normalized cuspidal Hecke eigenform of
even weight $k \geq 2$ and level $N$ with integer 
Fourier coefficients $\{a_f(n): n \in \N\}$. Then there exists a constant $c > 0$ depending on $f$ 
such that for any natural number $1< d \le (\log x)^c$, we have
$$
\pi_f(x,d) = \delta(d) \pi(x)+ {O} \(x~\exp\(\frac{- c_1\sqrt{\log x}}{d^2}\)\),
$$
where $c_1> 0$ is a constant which depends on $f$.
\end{lem}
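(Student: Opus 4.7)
The idea is to apply the unconditional effective Chebotarev density theorem of Lagarias and Odlyzko to the Galois extension $K_d/\Q$, with Galois group $\G_d \cong \mathrm{Im}(\tilde\rho_{d,f})$ and the conjugacy-invariant subset $C_d \subseteq \G_d$ of trace-zero elements. As noted in the paragraph preceding the lemma, for $p \nmid dN$ one has $a_f(p) \equiv 0 \pmod d$ if and only if $\tilde\rho_{d,f}(\sigma_p) \in C_d$, and the primes dividing $dN$ contribute only $O(\omega(dN)) = O(\log(dN))$, which is absorbed into the stated error.

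First I would collect two auxiliary estimates. Since $\G_d$ embeds into $\GL_2(\Z/d\Z)$, the degree $n_d := [K_d:\Q]$ satisfies $n_d \le |\GL_2(\Z/d\Z)| \le d^4$. Moreover, $K_d/\Q$ is unramified outside $dN$, so the standard Hensel (conductor–discriminant) estimate yields
$$
\log |d_{K_d}| \;\ll\; n_d \log\bigl(n_d\cdot dN\bigr) \;\ll\; d^4 \log(dN).
$$

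Next I would plug these bounds into the effective Chebotarev theorem in the form used in the proof of Theorem~6.2 of \cite{MMS}: for any conjugacy-invariant $C \subseteq \G_d$,
$$
\pi_C(x,K_d/\Q) \;=\; \frac{|C|}{|\G_d|}\,\pi(x) \;+\; O\!\left(x\exp\!\left(-c\sqrt{\log x / n_d}\right)\right),
$$
valid whenever $\log x \gg (\log|d_{K_d}|)^2$. Substituting $n_d \le d^4$ produces the claimed error $x\exp(-c_1\sqrt{\log x}/d^2)$, and the validity condition $\log x \gg d^8(\log(dN))^2$ is ensured by choosing the constant $c > 0$ in the hypothesis $d \le (\log x)^c$ sufficiently small (depending on $N$, hence on $f$). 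Specializing to $C = C_d$ and using $\delta(d) = |C_d|/|\G_d|$ gives the lemma.

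The main technical obstacle is the potential real (Siegel) zero of $\zeta_{K_d}(s)$, which in the general Lagarias–Odlyzko formulation contributes a parasitic term of the form $\tfrac{|C_d|}{|\G_d|}\mathrm{Li}(x^{\beta})$. Here the non-CM hypothesis enters: by the open-image theorem of Serre and Ribet, the image $\G_d$ is sufficiently large (uniformly in $d$) that, combined with Stark's bound $1-\beta \gg 1/\log|d_{K_d}|$ and the Deuring–Heilbronn phenomenon, this exceptional contribution can be absorbed into the stated error. This Siegel-zero step, which is the only delicate part of the derivation, is handled in detail in the proof of Theorem~6.2 of \cite{MMS}, and my argument defers to it.
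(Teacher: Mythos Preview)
Your proposal is correct and follows precisely the route the paper indicates: the paper does not give its own proof but simply records that the lemma is deduced from Lagarias--Odlyzko \cite{LO} together with the argument in \cite[Proof of Theorem~6.2]{MMS}, and your sketch (bounding $n_d\le d^4$ and $\log|d_{K_d}|\ll d^4\log(dN)$, inserting these into the effective Chebotarev estimate, and deferring the exceptional-zero analysis to \cite{MMS}) is exactly that deduction spelled out.
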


The following lemma follows from the works of Ribet \cite{Rib} and 
Serre \cite{Sr} (see \cite{MMprime}).
\begin{lem}\label{Z0}
Suppose that $f$ is non-CM. Then for any $\e>0$, we have
$$
Z(x)= \# \{p \leq x : a_f(p)=0 \}  \ll_{\e} \frac{x}{(\log x)^{3/2-\e}}.
$$
Further, suppose that GRH is true. Then we have
$$
Z(x) \ll x^{3/4}.
$$
\end{lem}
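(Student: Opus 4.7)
The plan is to estimate $Z(x)$ by the larger quantity $\pi_f(x,\ell)$ for a suitable auxiliary prime $\ell$, and then apply an effective Chebotarev density theorem. The key arithmetic input, available from the works of Ribet and Serre, is that since $f$ is non-CM, for all sufficiently large primes $\ell$ the image of the mod-$\ell$ Galois representation $\tilde{\rho}_{\ell,f}$ contains $\mathrm{SL}_2(\Z/\ell\Z)$, so that $|\G_\ell|\asymp \ell^4$. A direct count of trace-zero matrices in $\GL_2(\Z/\ell\Z)$ (separated by determinant class) then gives $\delta(\ell)=|C_\ell|/|\G_\ell|\asymp 1/\ell$. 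Since $a_f(p)=0$ forces $a_f(p)\equiv 0\pmod{\ell}$, one has $Z(x)\leq \pi_f(x,\ell)+O(1)$ for every prime $\ell\nmid N$.

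For the unconditional bound, I would first try \lemref{cheb} with $d=\ell$ and $\ell$ chosen as a small power of $\log x$; this balances the main term $\delta(\ell)\pi(x)\ll x/(\ell\log x)$ against the error $O(x\exp(-c_1\sqrt{\log x}/\ell^2))$ and yields an upper bound of order $x/(\log x)^{5/4-\e}$ only. To reach the stronger exponent $3/2-\e$ stated in the lemma, one must invoke Serre's refinement: several mod-$\ell_i$ representations are used simultaneously, exploiting the openness of $\prod_\ell \tilde{\rho}_{\ell,f}(\G)$ inside $\prod_\ell \GL_2(\Z_\ell)$ to make $\delta$ essentially multiplicative. Combined with analytic input from the symmetric power $L$-functions of $f$, this upgrades the single-$\ell$ saving and delivers the full unconditional exponent.

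For the GRH bound, the effective Chebotarev density theorem of Lagarias--Odlyzko takes the sharp form
$$
\pi_f(x,\ell) ~=~ \delta(\ell)\pi(x) + O\(x^{1/2}\(\log\mathrm{disc}(K_\ell) + [K_\ell:\Q]\log x\)\).
$$
Using the standard estimates $[K_\ell:\Q]\ll \ell^4$ and $\log\mathrm{disc}(K_\ell)\ll \ell^4\log(\ell N)$, together with $\delta(\ell)\ll 1/\ell$, one obtains
$$
Z(x) ~\ll~ \frac{x}{\ell\log x} + x^{1/2}\ell^4\log(\ell N x).
$$
Optimizing $\ell$ as a small power of $x$ then produces $Z(x)\ll x^{3/4}$.

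The main obstacle will be matching the precise exponents. In the unconditional setting the $d^2$ in the error exponent of \lemref{cheb} restricts $d$ to the range $d\leq(\log x)^{1/4-\e}$, so any naive single-$\ell$ application is stuck at $5/4-\e$; moving to $3/2-\e$ genuinely requires Serre's joint analysis of many $\ell$ coupled with the $L$-function input. In the GRH case, the optimization only succeeds if the discriminant of $K_\ell$ and the size of the trace-zero conjugacy class are estimated sharply enough to trade $\ell$ against $x^{1/2}$ efficiently; any loss in either input immediately weakens the exponent $3/4$.
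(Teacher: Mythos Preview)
The paper does not prove this lemma; it simply attributes both bounds to the literature (Ribet, Serre \cite{Sr}, and Murty--Murty \cite{MMprime}). Your sketch is an attempt to reconstruct those arguments, and the GRH half contains a concrete error: the displayed inequality $Z(x) \ll x/(\ell\log x) + x^{1/2}\ell^4\log(\ell Nx)$ does \emph{not} optimize to $x^{3/4}$. Balancing the two terms forces $\ell \asymp x^{1/10}$ and yields only $Z(x)\ll x^{9/10}$. Two ingredients are missing. First, one must pass from $\GL_2$ to ${\rm PGL}_2$: the condition ${\rm tr}(g)=0$ is equivalent, for non-scalar $g$, to $\bar g$ being a non-trivial involution in ${\rm PGL}_2$, so the relevant Galois image becomes $3$-dimensional and the forbidden set $2$-dimensional. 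Second, the naive Lagarias--Odlyzko error $O(x^{1/2}n_K\log x)$ must be replaced by the Murty--Murty refinement $O(|C|^{1/2}x^{1/2}\log(Mx))$, obtained via Cauchy--Schwarz over the irreducible characters of the Galois group. With both inputs, at level $\ell^m$ one has $|G_m|\asymp\ell^{3m}$ and $|C_m|\asymp\ell^{2m}$, hence $Z(x)\ll x/\ell^m + \ell^m x^{1/2}\log x$; the choice $\ell^m\asymp x^{1/4}$ then delivers $x^{3/4}$.

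The unconditional half has an analogous issue. You correctly diagnose that a single-$\ell$ application of \lemref{cheb} stalls at exponent $5/4$, but the proposed cure via symmetric power $L$-functions is not what Serre does. His argument in \cite{Sr} again hinges on the passage to ${\rm PGL}_2$---lowering the degree of the splitting field and collapsing the trace-zero locus to essentially a single conjugacy class---combined with a subgroup-transfer device that moves the Chebotarev count to a smaller extension. No automorphic input beyond the existence and large image of the Galois representation is invoked.
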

Applying \lemref{Z0}, Murty and Murty  \cite{MMprime} deduced the following result.
\begin{thm}\label{chebGRH}
Suppose that GRH is true and $f$ is non-CM. 
Then for $x \geq 2$, we have
$$
\pi_f^*(x,d) = \delta(d) \pi(x) ~+~ O\(d^3 x^{1/2} \log(dNx)\) ~+~ O( x^{3/4} ).
$$
\end{thm}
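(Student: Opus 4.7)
The plan is to derive this as a short corollary of two already-stated ingredients: the GRH-conditional bound $Z(x)\ll x^{3/4}$ from \lemref{Z0}, and the GRH form of the effective Chebotarev density theorem of Lagarias--Odlyzko applied to the extension $K_d/\Q$ with the conjugacy-stable set $C_d \subset \G_d$.

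First I would observe the trivial identity
\[
\pi_f(x,d) \;=\; \pi_f^*(x,d) + Z(x),
\]
which holds because any prime $p\le x$ with $a_f(p)=0$ automatically satisfies $a_f(p)\equiv 0 \pmod d$ and is therefore counted by $\pi_f(x,d)$ but not by $\pi_f^*(x,d)$. This reduces the problem to estimating $\pi_f(x,d)$ and $Z(x)$ separately.

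Next I would apply the Lagarias--Odlyzko effective Chebotarev theorem under GRH to the Galois extension $K_d/\Q$ and the trace-zero subset $C_d$ of ${\G}_d = \mathrm{Gal}(K_d/\Q)$. Three structural facts about $K_d$ must be fed into this estimate: (i) since ${\G}_d$ embeds in $\GL_2(\Z/d\Z)$, we have $[K_d:\Q] \le |\GL_2(\Z/d\Z)| = O(d^4)$; (ii) $K_d$ is unramified outside the primes dividing $dN$, so by the conductor--discriminant formula $\log|\mathrm{disc}(K_d)| \ll d^4 \log(dN)$; (iii) the trace-zero set satisfies $|C_d| = O(d^3)$, which is elementary matrix arithmetic in $\GL_2(\Z/d\Z)$. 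Combining these with the LO effective bound produces
\[
\pi_f(x,d) \;=\; \delta(d)\,\pi(x) \;+\; O\!\bigl( d^3 x^{1/2} \log(dNx) \bigr).
\]

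Finally I would substitute the GRH bound $Z(x) \ll x^{3/4}$ from \lemref{Z0} into the identity of the first step to obtain the stated estimate for $\pi_f^*(x,d)$. The main technical obstacle is the Chebotarev step: one has to track the $d$-dependence carefully through the bounds on $[K_d:\Q]$, $\log|\mathrm{disc}(K_d)|$, and $|C_d|$ so that the combined LO error matches the claimed $d^3 x^{1/2}\log(dNx)$; the remaining subtraction is purely bookkeeping.
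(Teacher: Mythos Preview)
Your proposal is correct and matches the paper's approach exactly: the paper does not give its own argument but simply attributes the result to Murty and Murty \cite{MMprime}, noting that it follows by combining their GRH-conditional effective Chebotarev estimate for $\pi_f(x,d)$ with the bound $Z(x)\ll x^{3/4}$ from \lemref{Z0}. Your decomposition $\pi_f(x,d)=\pi_f^*(x,d)+Z(x)$ together with the Lagarias--Odlyzko error term, using the inputs $|{\G}_d|=O(d^4)$, $\log|\mathrm{disc}(K_d)|\ll d^4\log(dN)$ and $|C_d|=O(d^3)$, reproduces precisely that derivation.
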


From the recent result of Thorner and Zaman (\cite{TZ}, Theorem 1.1), we have the following theorem.
\begin{thm}\label{TZc}
Suppose that $f$ is non-CM. There exists an absolute 
constant $c>0$ such that for any natural number $d > 1$ 
and for $\log x > c d^4 \log dN$, we have
\begin{equation*}
\pi_f(x,d) ~\ll~ \d(d) \pi(x).
\end{equation*}
\end{thm}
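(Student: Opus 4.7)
The plan is to derive the stated bound from the log-free Chebotarev density theorem of Thorner and Zaman (\cite{TZ}, Theorem 1.1), specialized to the finite Galois extension $K_d/\Q$ and the trace-zero conjugation-invariant subset $C_d \subseteq \G_d$ already introduced in the excerpt. For any prime $p \nmid dN$, the congruence $a_f(p) \equiv 0 \pmod{d}$ is equivalent to $\tilde{\rho}_{d,f}(\sigma_p) \in C_d$, so $\pi_f(x,d)$ equals the Chebotarev counting function $\pi_{C_d}(x, K_d/\Q)$ up to an $O(\omega(dN))$ error coming from the finitely many primes dividing $dN$. Note that the non-CM hypothesis on $f$ does not enter this reduction; it is used only to ensure that $\delta(d)$ is not degenerate and that $K_d/\Q$ has the expected group-theoretic size.

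Next I would record the two arithmetic parameters of $K_d/\Q$ that enter the Thorner--Zaman bound. Since $\G_d$ embeds into $\GL_2(\Z/d\Z)$, the degree satisfies $n_d := [K_d : \Q] \leq |\GL_2(\Z/d\Z)| \ll d^4$. Further, because $\rho_{d,f}$ is unramified outside the primes dividing $dN$, the ramified primes of $K_d$ all divide $dN$; a standard conductor--discriminant estimate for the restriction of the $\ell$-adic representations (bounding each local exponent by a uniform multiple of $n_d$) yields
$$
\log|\text{disc}(K_d)| \;\ll\; n_d \log(dN) \;\ll\; d^4 \log(dN).
$$

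With these inputs I would invoke Thorner--Zaman, which asserts that for a Galois extension $L/\Q$ with group $G$ and a conjugation-invariant subset $C \subseteq G$ one has $\pi_C(x, L/\Q) \ll (|C|/|G|) \pi(x)$ as soon as $\log x$ exceeds a suitable explicit polynomial in $n_L$ and $\log|\text{disc}(L)|$. Substituting $L = K_d$, $G = \G_d$, $C = C_d$ together with the bounds above and absorbing absolute constants produces the threshold $\log x > c d^4 \log(dN)$ stated in the theorem, whence $\pi_f(x,d) \ll \delta(d) \pi(x)$.

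The main technical obstacle is the log-free Chebotarev bound of Thorner--Zaman itself, which I would treat as a black box. Its proof rests on log-free zero-density estimates for Dedekind zeta and Artin $L$-functions, a Deuring--Heilbronn-type repulsion argument that neutralizes the effect of a possible Landau--Siegel zero, and analytic machinery converting zero statistics into prime-counting estimates. The role of the reduction proposed here is merely to package the modular-forms input---the mod $d$ representation, its conductor, and the trace-zero locus---into the precise shape required by \cite{TZ}, so the substantive analytic work is deferred entirely to that reference.
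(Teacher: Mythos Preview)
Your proposal is correct and matches the paper's approach: the paper does not give a detailed proof but simply states the theorem as a direct consequence of Thorner--Zaman \cite[Theorem 1.1]{TZ}, and your outline correctly spells out the specialization (bounding $[K_d:\Q]\ll d^4$ via $\GL_2(\Z/d\Z)$ and $\log|\mathrm{disc}(K_d)|\ll d^4\log(dN)$ via unramifiedness outside $dN$) needed to extract the threshold $\log x > c\,d^4\log(dN)$.
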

We will also need the following lemma (see \cite[Proof of Theorem 3]{GM}, 
\cite[Lemma 5.4]{MMprime}, \cite[Section 4]{Sr}).
\begin{lem}\label{deltal}
For any prime $\ell$, we have
$$
\d(\ell) = \frac{1}{\ell} +O\(\frac{1}{\ell^2}\) 
\phantom{mm}\text{and}\phantom{mm}
 \d(\ell^n) =O\(\frac{1}{\ell^n}\)
$$
for any $n \in \N$.
\end{lem}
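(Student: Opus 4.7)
The plan is essentially to follow the original argument of Serre \cite[Section 4]{Sr}: invoke Serre's and Ribet's open image theorems to identify the groups $G_\ell$ and $G_{\ell^n}$, and then perform an explicit combinatorial count of the trace-zero subset. Since $f$ is non-CM, the results of Ribet \cite{Rib} and Serre \cite{Sr} guarantee a finite set $S_f$ of exceptional primes such that for every $\ell \notin S_f$, the image $G_\ell = \tilde{\rho}_{\ell,f}(\G)$ equals the full preimage of its determinant subgroup $H_\ell := \det(G_\ell) \subseteq \mathbb{F}_\ell^*$ in $\GL_2(\mathbb{F}_\ell)$; in particular $G_\ell$ contains $\mathrm{SL}_2(\mathbb{F}_\ell)$. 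For the finitely many primes $\ell \in S_f$, the trivial bound $\delta(\ell^n) \le 1$ can be absorbed into the implicit constants, so we may assume $\ell \notin S_f$ throughout.

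For the first estimate I would count trace-zero elements of $\GL_2(\mathbb{F}_\ell)$ directly. Parametrizing a trace-zero matrix by its free entries $a, b, c$ (with bottom-right entry $-a$) gives determinant $-a^2 - bc$, and a short case analysis on whether $a = 0$ and whether $-t$ is a square in $\mathbb{F}_\ell^*$ shows that for each $t \in \mathbb{F}_\ell^*$ the number of trace-zero matrices of determinant $t$ is $\ell^2 + O(\ell)$, uniformly in $t$. Summing over $t \in H_\ell$ and dividing by $|G_\ell| = |H_\ell| \cdot \ell(\ell^2 - 1)$ yields
\[
\delta(\ell) \;=\; \frac{|H_\ell|\bigl(\ell^2 + O(\ell)\bigr)}{|H_\ell|\,\ell(\ell^2-1)} \;=\; \frac{1}{\ell} + O\!\left(\frac{1}{\ell^2}\right).
\]
For $\ell^n$ I would then use the $\ell$-adic strengthening of Serre's theorem, which implies that $G_{\ell^n}$ is the full preimage of $G_\ell$ under the reduction $\GL_2(\mathbb{Z}/\ell^n\mathbb{Z}) \twoheadrightarrow \GL_2(\mathbb{F}_\ell)$. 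The kernel of that reduction is parametrized by $M_2(\mathbb{Z}/\ell^{n-1}\mathbb{Z})$ via $X \mapsto I + \ell X$ and so has order $\ell^{4(n-1)}$; under this parametrization the trace condition on the lifts of a fixed trace-zero element becomes a surjective linear condition on $M_2(\mathbb{Z}/\ell^{n-1}\mathbb{Z})$, whose solution set has size $\ell^{3(n-1)}$. Hence $|C_{\ell^n}| = |C_\ell| \cdot \ell^{3(n-1)}$ and $|G_{\ell^n}| = |G_\ell| \cdot \ell^{4(n-1)}$, giving
\[
\delta(\ell^n) \;=\; \frac{\delta(\ell)}{\ell^{n-1}} \;=\; O\!\left(\frac{1}{\ell^n}\right).
\]

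The main technical point is not the counting itself but invoking Serre's and Ribet's open image results with the needed uniformity: one must verify that $S_f$ is genuinely finite (and independent of $n$) and that the determinant subgroup $H_\ell$ cancels cleanly between $|C_\ell|$ and $|G_\ell|$, introducing no hidden $\ell$-dependence. A minor secondary check is that the number of trace-zero lifts is independent of the base element, which follows immediately from the identification of the reduction kernel with $M_2(\mathbb{Z}/\ell^{n-1}\mathbb{Z})$ and the linearity of the trace on it.
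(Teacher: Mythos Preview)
The paper does not actually prove this lemma; it simply cites \cite[Proof of Theorem 3]{GM}, \cite[Lemma 5.4]{MMprime}, and \cite[Section 4]{Sr}. Your approach is precisely the one carried out in those references, and your counting for primes $\ell\notin S_f$ (both the trace-zero count in $\GL_2(\mathbb{F}_\ell)$ and the fiber count over $M_2(\Z/\ell^{n-1}\Z)$) is correct.

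There is, however, a genuine gap in your treatment of the exceptional primes for the second estimate. The bound $\delta(\ell^n)=O(1/\ell^n)$ is needed with an implied constant depending only on $f$, uniformly in both $\ell$ and $n$; this uniformity is exactly what is used in the paper (see the estimates leading to \eqref{vxq1} and \eqref{vxq2}, where one sums $\delta(q^m)$ over varying $m$). For a fixed $\ell\in S_f$, the trivial bound $\delta(\ell^n)\le 1$ tells you nothing as $n\to\infty$: you cannot ``absorb it into the implicit constant'' because that constant would have to dominate $\ell^n$ for every $n$. So the sentence ``we may assume $\ell\notin S_f$ throughout'' is not justified for the second assertion.

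The repair is standard but must be made explicit. For each exceptional $\ell$, the $\ell$-adic image of $\rho_{\ell,f}$ is still \emph{open} in $\GL_2(\Z_\ell)$ (this is part of the big-image package for non-CM forms), hence contains the principal congruence subgroup of some level $\ell^{n_0(\ell)}$. Your fiber-counting argument then applies verbatim above that level, yielding
\[
\delta(\ell^n)=\frac{\delta(\ell^{n_0(\ell)})}{\ell^{\,n-n_0(\ell)}}\le \frac{\ell^{n_0(\ell)}}{\ell^n}\qquad (n\ge n_0(\ell)),
\]
while for $n<n_0(\ell)$ the trivial bound $\delta(\ell^n)\le 1\le \ell^{n_0(\ell)}/\ell^n$ suffices. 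Since $S_f$ is finite, taking the maximum of $\ell^{n_0(\ell)}$ over $\ell\in S_f$ (and of the constant coming from the generic case) gives the required uniform constant depending only on $f$.
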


\subsection{The Sato-Tate conjecture}
\label{ssst}
One of our principal tools is the {\em Sato-Tate conjecture}, 
proved  by Barnet-Lamb, Geraghty, Harris and 
Taylor \cite[Theorem~B]{BLGHT11} (see 
also \cite{{CHT08},{HSBT10}}). 
It states that, for a non-CM normalized cuspidal Hecke eigenform~$f$, 
the numbers ${\lambda_f(p) = a_f(p)/p^{(k-1)/2}}$ are
equi-distributed in the interval ${[-2, 2]}$ with respect 
to the Sato-Tate measure 
${(1/\pi)\sqrt{1- t^2/4}~dt}.$ 
This means that for ${-2\le a\le b\le 2}$ the density of the set of primes~$p$ 
satisfying ${\lambda_f(p) \in [a, b]}$ is 
\begin{equation*}
\frac1\pi\int_a^b\sqrt{1-\frac{t^2}{4}} dt.
\end{equation*}

\smallskip
\subsection{Height of an algebraic number}
Let $K$ be a number field and $M_K$ be the set of places of $K$ normalized to 
extend the places of $\Q$. Also let $M_K^{\infty}$ be the subset of $M_K$ 
of infinite (Archimedean) places. For any prime $p$, 
let $\Q_p$ be the field of $p$-adic numbers and for each $v \in M_K$, let $K_v$ 
be the completion of $K$ with respect to the place $v$. For $v \in M_K$, let $d_v$ denotes 
the local degree at $v$ given by $d_v = [K_v : \Q_p]$ if $v \mid p$ 
and $d_v = [K_v : \R]$ if $v \in M_{K}^{\infty}$.
For $\alpha \in K\setminus \{0\}$,
the usual absolute logarithmic height of $\a$  is defined by
$$
h(\a) = \frac{1}{[K: \Q]} \sum_{v \in M_K} d_v \log^+ |\a|_v 
$$
where $\log^+ = \max\{\log , 0\}$.  It can be deduced from the definition of height that
\begin{equation}\label{hgt}
	h(\a) = \frac{1}{[K:\Q]} \( \sum_{\sigma : K \hookrightarrow \C} \log^{+}|\sigma(\a)| 
	~+~ \sum_{\fp} \max\{ 0, -\nu_{\fp}(\a)\} \log \cN_K(\fp) \),
\end{equation}
where the first sum runs over the embeddings of $K$ in $\C$ and the second 
sum runs over the prime ideals of $\cO_K$.
If $\alpha \in K$ is non-zero and not a root of unity, it is well known
that (see~\cite{ {BGH}, {ED}, {PV}}) 
\begin{equation}\label{lbh}
h\( \a \) ~ \geq~ \frac{1}{4d (\log^*d)^3}~,
\end{equation}
where $d = [\Q(\a): \Q]$ and $\log^* = \max \{ 1, \log  \}$.

\smallskip

\subsection{Prerequisites from cyclotomic polynomials and other required results}
Let $\Phi_n(x,y)$ be the $n$-th cyclotomic polynomial given by
$$
\Phi_n(x,y) = \prod_{\substack{m=1 \\ (m,n)=1}}^{n} \(x- e^{\frac{2\pi i m}{n}}y\).
$$
Let $K$ be a number field  with ring of integers $\cO_K$ and let $A, B \in \cO_K \setminus \{0\}$. 
A prime ideal $\fp$ of $\cO_K$ is called a primitive divisor of $A^n-B^n$ if 
$\fp \div A^n-B^n$ but $\fp \nmid A^m-B^m$ for any $1 \leq m < n$. 
The following two results follow from the work of Schinzel \cite{Sc}.
In the second Lemma, we have used the inequality \eqref{lbh}.
\begin{lem}\label{impr}
Let $K$ be a number field and $A, B \in  \cO_K \setminus \{0\}$ be such that $(A,B)=1$ 
and $A/B$ is not a root of unity. For any natural number $n > 2(2^m-1)$, $m=[\Q(A/B):\Q]$, 
if $\fp$ is a prime ideal of $\cO_K$ such that $\fp \div \Phi_n(A,B)$ and $\fp$ 
is not a primitive divisor of $A^n-B^n$, then
$$
\nu_{\fp}(\Phi_n(A,B)) ~ \leq~ \nu_{\fp} (n\cO_K).
$$
\end{lem}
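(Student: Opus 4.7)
My plan is to analyze $\nu_\fp(\Phi_n(A,B))$ via the M\"obius inversion formula
$$
\Phi_n(A,B) \;=\; \prod_{d \mid n}(A^d - B^d)^{\mu(n/d)},
$$
so that
$$
\nu_\fp(\Phi_n(A,B)) \;=\; \sum_{d \mid n}\mu(n/d)\,\nu_\fp(A^d - B^d).
$$
Coprimality of $A$ and $B$ forces $\fp \nmid AB$, so the image of $A/B$ in $(\cO_K/\fp)^\times$ has a well-defined multiplicative order $r$. By hypothesis, $\fp$ divides $\Phi_n(A,B)$ and hence $A^n - B^n$, so $r \mid n$, while the non-primitivity assumption gives $r < n$; set $s = n/r > 1$ and let $q$ denote the rational prime below $\fp$.

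The key step is a lifting-the-exponent identity in $\cO_K$: for every multiple $d = re$ of $r$,
$$
\nu_\fp(A^d - B^d) \;=\; \nu_\fp(A^r - B^r) \;+\; \nu_\fp(e\,\cO_K).
$$
I would derive this by writing $A^d - B^d = (A^r - B^r)\sum_{i=0}^{e-1}(A^r)^{i}(B^r)^{e-1-i}$, setting $A^r/B^r = 1 + u$ with $\nu_\fp(u) \geq 1$, and expanding the resulting sum as $((1+u)^e - 1)/u = e + \binom{e}{2}u + \cdots$, so that the linear term is $\fp$-adically dominant. The numerical hypothesis $n > 2(2^m - 1)$, inherited from Schinzel's primitive-divisor framework, rules out the small-$n$ exceptional configurations in which higher binomial terms can compete with the linear one, notably when the residue characteristic of $\fp$ equals $2$ and $\nu_\fp(u)$ happens to be exactly $1$.

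Substituting this identity into the M\"obius sum and reindexing by $d = re$ causes the $\nu_\fp(A^r - B^r)$ contribution to cancel, since $\sum_{e \mid s}\mu(s/e) = 0$ for $s > 1$; what remains is
$$
\nu_\fp(\Phi_n(A,B)) \;=\; \nu_\fp(q\,\cO_K)\sum_{e \mid s}\mu(s/e)\,\nu_q(e).
$$
Writing $\nu_q(e) = \sum_{j \geq 1}[q^j \mid e]$ and swapping the order of summation via M\"obius orthogonality shows that the last sum equals $1$ if $s$ is a positive power of $q$ and $0$ otherwise. Our hypothesis $\fp \mid \Phi_n(A,B)$ forces the former case, and then $q \mid s \mid n$ gives
$$
\nu_\fp(\Phi_n(A,B)) \;=\; \nu_\fp(q\,\cO_K) \;\leq\; \nu_\fp(n\,\cO_K),
$$
which is the desired bound. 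The main technical obstacle is the precise justification of the lifting-the-exponent identity at primes of residue characteristic $2$, and it is precisely for this step that the numerical hypothesis on $n$ is invoked.
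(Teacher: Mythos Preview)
The paper does not supply a proof of this lemma; it is recorded as a consequence of Schinzel's 1974 paper and used as a black box. Your M\"obius-inversion strategy is the standard route to such statements, and the reduction to $\nu_\fp(q\cO_K)\sum_{e\mid s}\mu(s/e)\nu_q(e)$ is carried out correctly \emph{provided} the lifting identity holds.

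That identity, however, is false in general, and your explanation of how the hypothesis $n>2(2^m-1)$ repairs it is not the right mechanism. Take $K=\Q(i)$, $\fp=(1+i)$, $A=2+i$, $B=1$: then $m=2$, $r=1$, and $\nu_\fp(A^e-B^e)=1,2,7$ for $e=1,2,4$, whereas your identity predicts $1,3,5$; the true value $\nu_\fp(\Phi_4(A,B))=7-2=5$ exceeds $\nu_\fp(4\cO_K)=4$, so the lemma itself fails at $n=4$, and this is why one needs $n>6$ here --- not because the binomial expansion suddenly becomes well-behaved for large $n$. The correct argument avoids your global identity altogether. Using only the trivial fact that $\nu_\fp(x^{e'}-y^{e'})=\nu_\fp(x-y)$ when $e'$ is coprime to the residue characteristic $q$, one sees that $e\mapsto\nu_\fp(A^{re}-B^{re})$ depends only on $\nu_q(e)$; this alone forces $s=q^a$ and gives the exact formula $\nu_\fp(\Phi_n(A,B))=\nu_\fp(A^n-B^n)-\nu_\fp(A^{n/q}-B^{n/q})$. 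This single difference can exceed $\nu_\fp(q)$ only at the unique ``boundary'' index $a$ for which $(q-1)\,\nu_\fp(A^{n/q}-B^{n/q})$ equals the ramification index, and the hypothesis $n>2(2^m-1)$ --- via the constraints that $r$, the ramification index, and the residue degree of the prime of $\Q(A/B)$ below $\fp$ are all bounded in terms of $m$ --- is exactly what pushes $a$ past that step. Carrying out this case analysis, including the ramified odd-$q$ cases you do not mention, is the substance of Schinzel's argument.
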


\begin{lem}\label{Nphin}
Let $K$ be a number field and $A, B \in  \cO_K \setminus \{0\}$. 
If $(A,B)=1$ and $A/B$ is not a root of unity, then
\begin{equation}
\log|\cN_K(\Phi_n(A,B))| = h(A/B) [K:\Q] ~\varphi(n) \(1+O\(\frac{2^{\w(n)} \log(n+1)}{\varphi(n)}\)\),
\end{equation}
where the implied constant depends only on the degree of $\Q(A/B)$ over $\Q$. 
Here $h(\cdot)$ denotes the absolute logarithmic height of an algebraic number
and $\cN_K$ denotes the absolute norm on $K$.
\end{lem}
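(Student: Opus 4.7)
The plan is to combine M\"{o}bius inversion with an archimedean expansion of the norm and a Baker-type lower bound for $|1-\alpha^d|$. Starting from the identity $A^n-B^n=\prod_{d\mid n}\Phi_d(A,B)$, M\"{o}bius inversion produces
\begin{equation}\label{PMi}
\log|\cN_K(\Phi_n(A,B))|\ =\ \sum_{d\mid n}\mu(n/d)\log|\cN_K(A^d-B^d)|,
\end{equation}
which is legitimate because $A/B$ is not a root of unity, so every factor $A^d-B^d$ is non-zero.

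For each $d\mid n$ I would expand $\log|\cN_K(A^d-B^d)|=\sum_\sigma\log|\sigma(A)^d-\sigma(B)^d|$ over the embeddings $\sigma\colon K\hookrightarrow\C$. Setting $m_\sigma=\max(|\sigma(A)|,|\sigma(B)|)$ and choosing $\eta_\sigma\in\{\sigma(A)/\sigma(B),\sigma(B)/\sigma(A)\}$ so that $|\eta_\sigma|\le 1$, we can factor $|\sigma(A)^d-\sigma(B)^d|=m_\sigma^d|1-\eta_\sigma^d|$. The coprimality hypothesis $(A,B)=1$ forces $\max(|A|_v,|B|_v)=1$ at every finite place, so \eqref{hgt} collapses to $\sum_\sigma\log m_\sigma=[K:\Q]\,h(A/B)$. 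Inserting this into \eqref{PMi} and using $\sum_{d\mid n}\mu(n/d)d=\varphi(n)$ separates off the desired main term $\varphi(n)[K:\Q]h(A/B)$ and leaves the error
$$
E\ =\ \sum_\sigma\sum_{d\mid n}\mu(n/d)\log|1-\eta_\sigma^d|.
$$

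The remaining task is to show $|E|\ll [K:\Q]\,h(A/B)\,2^{\w(n)}\log(n+1)$ with implied constant depending only on $D=[\Q(A/B):\Q]$. Each $\eta_\sigma$ is algebraic of degree at most $D$, lies in the closed unit disc, is not a root of unity, and has absolute height equal to $h(A/B)$. Pairing the trivial upper bound $\log|1-\eta_\sigma^d|\le\log 2$ with a Baker-type lower bound of the form
$$
\log|1-\eta_\sigma^d|\ \ge\ -C(D)\,h(\eta_\sigma)\,\log(d+1),
$$
and then invoking \eqref{lbh} to absorb the constant $\log 2$ into a multiple of $h(A/B)$, yields the uniform estimate $|\log|1-\eta_\sigma^d||\le C'(D)\,h(A/B)\,\log(d+1)$. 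Since $\mu(n/d)$ is supported on the at most $2^{\w(n)}$ divisors $d$ with $n/d$ squarefree, summing over $d$ and then over the $[K:\Q]$ embeddings produces the required bound on $|E|$.

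The main obstacle is the Baker-type lower bound itself: one needs, for algebraic $\alpha$ of degree at most $D$ that is not a root of unity, an inequality $\log|1-\alpha^d|\ge -C(D)\,h(\alpha)\,\log(d+1)$ with constant depending only on $D$ and with the correct linear dependence on $h(\alpha)$ and logarithmic dependence on $d$. This is what prevents the sporadic embeddings with $|\eta_\sigma|$ on or close to the unit circle from ruining the estimate, and it is a standard output of the theory of linear forms in two logarithms (Baker--W\"{u}stholz, Laurent--Mignotte--Nesterenko). Once this estimate is in place, the rest of the argument is routine bookkeeping.
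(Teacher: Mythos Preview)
Your proposal is correct and follows essentially the same approach the paper invokes: the paper does not give a self-contained proof but states that the lemma ``follows from the work of Schinzel~\cite{Sc}'' together with the height lower bound~\eqref{lbh}, and your sketch is precisely Schinzel's M\"obius-inversion argument combined with a Baker-type lower bound for $|1-\eta_\sigma^d|$, with~\eqref{lbh} used exactly as the paper indicates to absorb the additive constants into $C(D)\,h(A/B)$. One small remark: to obtain the error with a single power of $\log(n+1)$ (rather than $(\log n)^2$) you should appeal to a Baker--W\"ustholz--type bound for the form $d\log\alpha-2k\log(-1)$, which gives dependence $\log\max(|d|,|k|)$, rather than the sharper-constant Laurent--Mignotte--Nesterenko bound whose $(\log b')^2$ would cost an extra logarithm.
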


We need the following Brun-Titchmarsh inequality (see \cite{HR}, Theorem 3.8).
\begin{thm}
If $1 \leq n< x$ and $ (a, n)=1$, then
\begin{equation}\label{BT}
\pi(x;n,a) = \#\left\{p \leq x  : p \equiv a (\mod n) \right\} ~<~  \frac{3x}{\varphi(n) \log(x/n)}.
\end{equation}
\end{thm}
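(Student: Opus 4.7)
The plan is to derive this inequality via Selberg's upper bound sieve (the $\Lambda^2$ method) applied to the arithmetic progression $a \pmod n$. Set $\mathcal{A} = \{m : 1 \le m \le x,\ m \equiv a \pmod n\}$, so $|\mathcal{A}| = x/n + O(1)$. Since $(a,n) = 1$, every prime counted by $\pi(x;n,a)$ is coprime to $n$; taking $\mathcal{P}$ to be the set of primes not dividing $n$ and writing $S(\mathcal{A}, \mathcal{P}, z)$ for the number of $m \in \mathcal{A}$ with no prime factor from $\mathcal{P}$ of size at most $z$, one has
\begin{equation*}
\pi(x; n, a) \;\le\; S(\mathcal{A}, \mathcal{P}, z) + \pi(z)
\end{equation*}
for any $z \le x$. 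The Chinese Remainder Theorem gives clean local counts: for squarefree $d$ coprime to $n$,
\begin{equation*}
\#\{m \in \mathcal{A} : d \mid m\} \;=\; \frac{x}{nd} + \theta_d, \qquad |\theta_d| \le 1,
\end{equation*}
so the sieve density at a prime $p \nmid n$ is exactly $1/p$.

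With these inputs, Selberg's sieve at level $\xi^2$ yields
\begin{equation*}
S(\mathcal{A}, \mathcal{P}, \xi) \;\le\; \frac{x/n}{G_n(\xi)} \;+\; R,
\qquad
G_n(\xi) \;=\; \sum_{\substack{d \le \xi \\ (d,n)=1,\ \mu(d)^2=1}} \frac{1}{\varphi(d)},
\end{equation*}
where the remainder $R$ is controlled by pairs of divisors of size at most $\xi^2$. An elementary comparison, expanding $1/\varphi(d) = \sum_{m:\,\mathrm{rad}(m)=d} 1/m$ and dominating below by $\sum_{m \le \xi,\,(m,n)=1} 1/m$, gives $G_n(\xi) \ge (\varphi(n)/n)\log \xi + O_n(1)$. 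Choosing $\xi^2$ just below $x/n$ produces the asymptotic Brun--Titchmarsh bound $\pi(x; n, a) \le (2 + o(1))\, x/(\varphi(n)\log(x/n))$ as $x/n \to \infty$.

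The main obstacle is upgrading this asymptotic statement to the uniform explicit inequality with constant $3$ valid for every pair $(x, n)$ with $1 \le n < x$; the tight case is $n$ close to $x$, where $\log(x/n)$ is small and every error term must be tracked. Halberstam and Richert accomplish this in Theorem~3.8 of \cite{HR} by analysing the Selberg quadratic form in closed form via the identity
\begin{equation*}
\frac{1}{\varphi(d)} \;=\; \frac{1}{d}\sum_{e \mid d} \frac{\mu(e)^2}{\varphi(e)},
\end{equation*}
by handling the remainder $R$ through the Cauchy--Schwarz structure of the Selberg weights, and by calibrating the sieve level so that the slack between the optimal asymptotic constant $2$ and the stated uniform constant $3$ absorbs all residual errors throughout the full range $1 \le n < x$.
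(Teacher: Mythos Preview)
The paper does not prove this statement at all: it is quoted as the classical Brun--Titchmarsh inequality and attributed directly to Halberstam and Richert \cite[Theorem~3.8]{HR}, with no argument given. Your sketch of the Selberg $\Lambda^2$ sieve is precisely the standard route taken in that reference, so there is nothing to compare; you are simply supplying an outline of the proof the paper chose to cite rather than reproduce.
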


 We also need the following theorem due to Bugeaud \cite{YB}.
\begin{thm}\label{Bg}
Let $m, n \geq 2$ be rational integers with $mn \geq 6$. Also let $K$ be an algebraic 
number field and $\a, \b$ be non-zero algebraic integers of $K$ 
with absolute height at most $A ~(\geq 3)$.
Suppose that $x, y$ are coprime algebraic integers in $K$ and let $\e > 0$ be a real number. 
Then there exists an effectively computable constant $c$ 
depending on $\e, m, n, A$ and $K$ such that
$$
P(\a x^m + \b y^n) ~\geq~ \frac{\log\log \max\{|\cN_K(x)|, |\cN_K(y)|\}}{(7+\e)~ mn[K:\Q]  ~\min\{m, n\}}
$$
provided that $\max\{|\cN_K(x)|, |\cN_K(y)|\} \geq c$. Here $\cN_K$ denotes the absolute norm on $K$.
\end{thm}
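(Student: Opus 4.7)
The plan is to follow Baker's method, combining the Archimedean theorem on linear forms in logarithms (Baker--W\"ustholz, Matveev) with its $\fp$-adic counterpart (Yu's theorem), via a pigeonhole step. This is the standard route to Pillai-type inequalities over number fields, and has been used by several authors; Bugeaud's contribution is the refinement of the constants.

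\textbf{Setup and pigeonhole.} Write $\gamma := \a x^m + \b y^n$ and set $P := P(\gamma)$, $Q := \max\{|\cN_K(x)|, |\cN_K(y)|\}$; we may assume $\gamma \neq 0$. Every prime ideal $\fp$ of $\cO_K$ dividing $(\gamma)$ and coprime to $\a\b$ lies above a rational prime that is at most $P$, so the set $S$ of such primes satisfies $|S| \ll_{A,K} P/\log P$. From $\log|\cN_K(\gamma)| = \sum_\fp v_\fp(\gamma)\log \cN_K(\fp)$ and the pigeonhole principle, some $\fp \in S$ satisfies
\begin{equation*}
v_\fp(\gamma)\log \cN_K(\fp) \;\geq\; \frac{\log|\cN_K(\gamma)|}{|S|+O_A(1)}.
\end{equation*}
The finitely many primes dividing $\a\b xy$ are absorbed into the effectively computable constant $c$, so we may assume $\fp$ is coprime to $\a\b xy$.

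\textbf{Linear forms in logarithms.} The congruence $\a x^m \equiv -\b y^n \pmod{\fp^{v_\fp(\gamma)}}$ produces the nontrivial $\fp$-adic linear form
\begin{equation*}
\Lambda_\fp \;=\; \log_\fp\!\bigl(\a/(-\b)\bigr) + m\log_\fp x - n\log_\fp y,
\end{equation*}
whose $\fp$-adic valuation satisfies $v_\fp(\Lambda_\fp) \geq v_\fp(\gamma) - O(1)$. The sharpest known $\fp$-adic linear forms estimate (Yu's theorem in three logarithms) then yields an upper bound of the shape
\begin{equation*}
v_\fp(\Lambda_\fp) \;\leq\; C_1(\e,m,n,A,K)\cdot\cN_K(\fp)^{[K:\Q]}\cdot h(x)\,h(y)\log \max(m,n),
\end{equation*}
in which the dependence of $C_1$ on $(m,n)$ is essentially $mn\min(m,n)$. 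Combining with $\cN_K(\fp) \leq P^{[K:\Q]}$ and $|S| \ll P/\log P$ yields $\log|\cN_K(\gamma)| \leq C_2 P^{O_K(1)} h(x)\,h(y)$. A parallel Archimedean step, applied at an embedding $\sigma$ for which $|\sigma(y)|$ is largest, uses Baker's theorem on $m\log|\sigma(x)| - n\log|\sigma(y)| + \log|\sigma(\a/(-\b))|$ to match the bound at infinite places and forces $h(x), h(y) \ll \log Q$. Rearranging produces the desired inequality $P \geq \log\log Q / ((7+\e) mn[K:\Q]\min(m,n))$.

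\textbf{Main obstacle.} The difficulty is not the structural shape of the bound but the extraction of the sharp numerical constant $(7+\e)$. Any direct application of Yu's and Baker's theorems gives the correct structural dependence $mn[K:\Q]\min(m,n)$ in the denominator but a larger numerator; obtaining $7+\e$ requires invoking the most refined known constants in Matveev's and Yu's bounds and an optimal choice of auxiliary parameters, in particular an optimal balance between the Archimedean and the $\fp$-adic contributions. Handling the exceptional primes (those dividing $\a\b$, the discriminant of $K$, or $xy$) is routine and is absorbed into~$c$.
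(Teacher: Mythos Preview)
The paper does not prove this theorem; it is quoted as a prerequisite result due to Bugeaud (reference \cite{YB}, \emph{Bull.\ London Math.\ Soc.}\ \textbf{32} (2000)), so there is no proof in the paper to compare your proposal against.

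For what it is worth, your outline is indeed the method Bugeaud uses: combine Yu's $\fp$-adic estimate for linear forms in logarithms with the Archimedean Baker--Matveev bound, together with a pigeonhole step over the prime ideals dividing~$\gamma$, and your identification of the main difficulty (extracting the sharp constant $7+\e$) is accurate. One small correction: your sentence ``the finitely many primes dividing $\a\b xy$ are absorbed into the effectively computable constant~$c$'' is not right as written, since the primes dividing $xy$ vary with $x,y$ and cannot be bounded in terms of $(\e,m,n,A,K)$ alone. What one actually uses is that if $\fp\mid x$ then $\fp\nmid y$ by coprimality, so $v_\fp(\gamma)$ is bounded by $v_\fp(\b)$ (and symmetrically), whence such primes contribute only $O_A(1)$ to $\log|\cN_K(\gamma)|$; this is the correct way to dispose of them.
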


\smallskip

\section{Largest prime factor of Fourier coefficients at primes}
In this section, we will derive lower bounds for the largest prime factor of 
Fourier coefficients of a non-CM normalized cuspidal Hecke eigenform.
Now we shall prove the following lemma
which is essential for the proofs of the theorems.

\begin{lem}\label{lemlogfA}
Let $f$ be a non-CM normalized cuspidal Hecke eigenform 
of even weight $k \geq 2$ for $\Gamma_0(N)$ 
having integer Fourier coefficients $\{a_f(n) : n \in \N\}$. 
Let $A \subseteq \{p : a_f(p) \neq 0\}$ be a set of primes 
having positive upper density. 
Then there exists a strictly increasing sequence $\(x_n\)_{n \geq 1}$ of natural numbers such that 
$$
\sum_{\substack{p \leq x_n \\ p \in A}} \log|a_f(p)| ~\gg~ x_n
$$ 
for $n \in \N$. Here the implied constant depends on $f$ and $A$.
\end{lem}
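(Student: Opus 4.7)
The plan is to combine Sato--Tate equidistribution with the positive-upper-density hypothesis on $A$ in order to exhibit a positive-upper-density subset of $A$ on which $|a_f(p)| \gg p^{(k-1)/2}$, so that $\log|a_f(p)| \gg \log p$; summing over such primes in $[1,x_n]$ then yields the required $\gg x_n$.

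Set $\lm_f(p) = a_f(p)/p^{(k-1)/2}$ and let $\a > 0$ denote the upper density of $A$. By Sato--Tate (recalled in Section~\ref{ssst}), for each $\e > 0$ the set $\{p : |\lm_f(p)| < \e\}$ has density
\begin{equation*}
\mu(\e) \;=\; \frac{1}{\pi}\int_{-\e}^{\e}\sqrt{1-\frac{t^2}{4}}\,dt,
\end{equation*}
and $\mu(\e) \to 0$ as $\e \to 0^+$. Fix $\e = \e(\a) > 0$ with $\mu(\e) < \a/2$, and let $(x_n)$ be a strictly increasing sequence of integers along which $\#\{p \in A : p \leq x_n\}/\pi(x_n) \to \a$. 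Writing
\begin{equation*}
A_n' \;:=\; \{p \leq x_n : p \in A,\ |\lm_f(p)| \geq \e\},
\end{equation*}
the trivial inclusion-exclusion together with the density of $\{p : |\lm_f(p)| < \e\}$ gives $\#A_n' \geq (\a - \mu(\e) - o(1))\pi(x_n) \gg x_n/\log x_n$.

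For every $p \in A_n'$ one has $\log|a_f(p)| \geq \tfrac{k-1}{2}\log p + \log \e$. Since only $O(\sqrt{x_n}/\log x_n) = o(\pi(x_n))$ primes lie below $\sqrt{x_n}$, the majority of elements of $A_n'$ satisfy $p \geq \sqrt{x_n}$, and for such $p$ one has $\log p \geq \tfrac{1}{2}\log x_n$. Summation then yields
\begin{equation*}
\sum_{\substack{p \leq x_n\\ p \in A}} \log|a_f(p)| \;\geq\; \sum_{\substack{p \in A_n'\\ p \geq \sqrt{x_n}}} \Bigl(\tfrac{k-1}{2}\log p + \log \e\Bigr) \;\gg\; \log x_n \cdot \frac{x_n}{\log x_n} \;=\; x_n,
\end{equation*}
as required. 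The only deep input is Sato--Tate; the sole delicate bookkeeping point is choosing $\e$ small enough in terms of $\a$, which is precisely where the positive upper density of $A$, rather than some weaker hypothesis, is genuinely used.
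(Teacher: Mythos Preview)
Your proof is correct and follows essentially the same approach as the paper: both arguments use Sato--Tate to choose a threshold ($\e$ in your notation, $1/M$ in the paper's) so that the set $\{p:|a_f(p)|\ge \e\,p^{(k-1)/2}\}$ has density exceeding $1-\a/2$, intersect it with $A$ to get a subset of positive upper density, and then sum $\log|a_f(p)|\ge \tfrac{k-1}{2}\log p + \log\e$ over that subset along the sequence $(x_n)$. The only cosmetic difference is that the paper finishes with partial summation to get $\sum_{p\in W(x_n)}\log p \gg x_n$, whereas you use the cruder observation that all but $o(\pi(x_n))$ primes up to $x_n$ exceed $\sqrt{x_n}$; both work equally well here.
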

\begin{proof}
Let $A(x) = \{ p \leq x : p \in A\}$ and 
\begin{equation}
	\eta ~=~ \limsup_{x \rightarrow \infty} \frac{\#A(x)}{\pi(x)} ~>~ 0.
\end{equation}
This implies that there exists a strictly increasing sequence 
of natural numbers 
$(x_n)_{n \geq 1}$ such that
\begin{equation}\label{lowdenS}
	\frac{\#A(x_n)}{\pi(x_n)} ~\geq~ \frac{\eta}{2}~,
	\phantom{m} \forall ~~ n \in \N.
\end{equation}
	From the Sato-Tate conjecture (proved in~\cite{BLGHT11}, 
	see Subsection~\ref{ssst}) it follows that there exists a 
	positive real number $M=M(\eta)$ such that
\begin{equation}\label{lowdenT}
	\#\left\{p \leq x :  |a_f(p)| \geq \frac{p^{(k-1)/2}}{M}  \right\} > \(1-\frac{\eta}{4}\) \pi(x)
\end{equation}
for all sufficiently large $x$.
Let us set $T=\{p :  |a_f(p)| \geq p^{(k-1)/2}/M\} $ 
and  $W = A \cap T$ . Also let
$T(x) = T \cap \{p: p\leq x\}$ and $W(x) = W \cap \{p: p\leq x\}$. 
Then from \eqref{lowdenS} and \eqref{lowdenT},
we have
$$
\#W(x_n)~=~ \#A(x_n) + \#T(x_n) - \#(A \cup T)(x_n) 
~\geq~
\frac{\eta}{4} \pi(x_n)
$$
for all sufficiently large $n$. This implies that
\begin{eqnarray*}
	\sum_{p \in A(x_n)} \log|a_f(p)| 
	~\geq~ 
	\sum_{p \in W(x_n)} \log|a_f(p)| 
	& \geq &
	\sum_{\substack{p \in W(x_n)\\ p > M^2}} \log\(\frac{p^{(k-1)/2}}{M}\)\\
	& \geq &
	\frac{k-1}{2} \sum_{\substack{p \in W(x_n)\\ p > M^2}} \log p ~+~ O\(\# W(x_n) \log M \).
\end{eqnarray*}
By partial summation formula, we deduce that
$$
\sum_{\substack{p \in W(x_n)\\ p > M^2}} \log p \gg x_n
$$
for all sufficiently large $n$. Thus we obtain
$$
\sum_{p \in A(x_n)} \log|a_f(p)| \gg x_n
$$
for all sufficiently large $n$. This completes the proof.
\end{proof}

\subsection{Proof of \thmref{thm1}}
Let $f$ be a non-CM normalized cuspidal Hecke eigenform of even weight $k \geq 2$
for $\Gamma_0(N)$ 
having integer Fourier coefficients $\{a_f(n) : n \in \N\}$.
Let $ \e > 0$ be a real number and consider the set
\begin{equation*}
S = \left\{p  ~:~ a_f(p) \neq 0, ~ P(a_f(p)) \leq (\log p)^{1/8}(\log\log p)^{3/8-\e}\right\}.
\end{equation*}
Also let $S(x) = S \cap \{p: p\leq x\}$. We want to show that
\begin{equation}\label{SD0}
	\lim_{x \rightarrow \infty} \frac{\#S(x)}{\pi(x)} =0.
\end{equation}
Suppose not. Then
\begin{equation}\label{lowdenS>0}
\eta = \limsup_{x \rightarrow \infty} \frac{\#S(x)}{\pi(x)} > 0.
\end{equation}

Let $\cP_{S}(x) = \left\{q ~:~   q \text{ divides }  \(\prod_{p \in S(x)} a_f(p) \), ~q ~\text{prime}  \right\}$ 
and write
\begin{equation}\label{prodafp}
\prod_{p \in S(x)} |a_f(p)| = \prod_{q \in \cP_{S}(x)} q^{\nu_{x, q}},
\end{equation}
where
$$
\nu_{x, q} = \sum_{p \in S(x)} \nu_q(a_f(p)).
$$
Using Deligne's bound, we have
\begin{eqnarray}\label{vxq}
 \nu_{x,q} 
 ~\leq~ 
 \sum_{\substack{p \leq x \\ a_f(p) \neq 0}} \nu_q(a_f(p))\
 ~=~
 \sum_{\substack{p \leq x \\ a_f(p) \neq 0}} \sum_{\substack{m \geq 1 \\ q^m | a_f(p)}} 1
&=&
 \sum_{1 \leq m \leq \frac{\log(2 x^{(k-1)/2})}{\log q}} 
\sum_{\substack{p \leq x,  ~a_f(p) \neq 0 \\ a_f(p) \equiv 0 (\mod q^m)}} 1  \nonumber \\
 &=& 
 \sum_{1 \leq m \leq \frac{\log(2 x^{(k-1)/2})}{\log q}}  \pi_{f}^{*}(x, q^m).
 \end{eqnarray}
 Applying  \thmref{TZc}, there exists a positive constant $c$ 
 depending on $f$ such that for $1< d \leq c \frac{(\log x)^{1/4}}{(\log\log x)^{1/4}}$, we have
 \begin{equation}\label{unpixd}
\pi_f^*(x, d) \ll \d(d) \pi(x).
 \end{equation}
Set
\begin{equation}
 z=c \frac{(\log x)^{1/4}}{(\log\log x)^{1/4}} \phantom{mm} \text{and}\phantom{mm}
 y=  (\log x)^{1/8}(\log\log x)^{3/8-\e}.
\end{equation}
For $q \in \cP_S(x)$, let  $m_0 = m_0(x, q)=  \big[\frac{\log z}{\log q}\big]$.  Note that
$m_0 \geq 1$ for all $x$ sufficiently large. We estimate the sum
in \eqref{vxq} by dividing it into two parts; for $1\le m \le m_0$ and then 
for $m> m_0$. It follows from \eqref{unpixd} that the first sum is
 \begin{eqnarray}\label{vxq1}
\sum_{1 \leq m \leq m_0}  \pi_{f}^{*}(x, q^m) 
&\ll& 
\sum_{1 \leq m \leq m_0} \d(q^m) \pi(x) ~\ll~
\sum_{1\leq m \leq m_0} \frac{\pi(x)}{q^m}
~\ll~ 
\frac{\pi(x)}{q} .
 \end{eqnarray}
 The inequality in \eqref{vxq1} follows from \lemref{deltal} and by noticing that $q \le y$. 
 The second sum is equal to
 \begin{eqnarray}\label{vxq2}
\sum_{m_0 < m \leq \frac{\log(2 x^{(k-1)/2})}{\log q}} \pi_{f}^{*}(x, q^m)
 &\leq&
 \pi_{f}^{*}(x, q^{m_0}) \sum_{m_0 < m \leq \frac{\log(2 x^{(k-1)/2})}{\log q}} 1 \nonumber\\
 &\ll&
 \d(q^{m_0}) \pi(x)  \cdot \frac{\log x}{\log q}
\nonumber\\
 & \ll &
\frac{\pi(x)\log x}{q^{m_0} \log q}
 ~\ll~ 
 \frac{x}{z} \cdot~\frac{q}{\log q}.
\end{eqnarray}
From \eqref{vxq1} and \eqref{vxq2}, we deduce that
\begin{equation}\label{vxq3}
\nu_{x,q} ~\ll~ \frac{x}{z} \cdot \frac{q}{\log q}.
\end{equation}
It follows from \eqref{prodafp} that
\begin{equation}
	\sum_{p \in S(x)} \log|a_f(p)| ~= \sum_{q \in \cP_S(x)} \nu_{x, q} \log q.
\end{equation}
Thus applying \eqref{vxq3}, we obtain
\begin{equation}\label{upp}
	\begin{split}
	 \sum_{p \in S(x)} \log|a_f(p)|
	 ~=~
		\sum_{q \in \cP_S(x)}\nu_{x,q} \log q 
		~\ll~  
		\frac{x}{z}\sum_{q \leq y}q  
		~\ll~ 
		\frac{x}{z}. \frac{y^2}{\log y} 
		~\ll~ 
		\frac{x}{(\log\log x)^{2\e}}
	\end{split}
\end{equation}
for all sufficiently large $x$.
From \eqref{lowdenS>0} and \lemref{lemlogfA}, we deduce that there exists a strictly increasing sequence $(x_n)_{n\geq 1}$ of natural numbers such that
$$
\sum_{p \in S(x_n)} \log|a_f(p)| \gg x_n .
$$
This is a contradiction to \eqref{upp} 
for sufficiently large $n$. Now \thmref{thm1} follows from \eqref{SD0} and \lemref{Z0}.

\begin{rmk}
 We note that for any real valued non-negative function $u$ 
 satisfying $u(x) \to 0$ as $x \to \infty$, the lower bound 
 $(\log p)^{1/8}(\log\log p)^{3/8-\e}$ in \thmref{thm1} can be replaced 
 with $(\log p)^{1/8}(\log\log p)^{3/8}u(p)$.
\end{rmk}

\medskip

\subsection{Proof of \thmref{thm3}}
The proof follows along the lines of proof of \thmref{thm1}.
Let 
\begin{equation*}
S = \left\{p  : a_f(p) \neq 0, ~ P(a_f(p)) \leq c p^{1/14}(\log p)^{2/7}\right\},
\end{equation*}
where $0 < c <1$ is a constant which will be chosen later.
We will show that
\begin{equation}\label{limsup}
\eta 
=
\limsup_{x \rightarrow \infty} \frac{\#S(x)}{\pi(x)} 
~\leq~ 
\frac{2}{13(k-1)}.
\end{equation}
Suppose that \eqref{limsup} is not true i.e., $\eta > \frac{2}{13(k-1)}$. Choose 
$\e>0$ such that 
\begin{equation}\label{cep}
\eta(1-2\e) > \frac{2}{13(k-1)}. 
\end{equation}
Then there exists a strictly increasing 
sequence of natural numbers $(x_n)_{n \geq 1}$ such that
$$
\frac{\#S(x_n)}{\pi(x_n)} \geq (1-\e) \eta, \phantom{m} \forall~~ n \in \N.
$$
Let
$$
\cP_{S}(x) = \left\{q ~:~   q \text{ divides }  \(\prod_{p \in S(x)} a_f(p) \), ~q ~\text{prime}  \right\}.
$$
Write
\begin{equation}\label{c1}
\prod_{p \in S(x)} |a_f(p)| = \prod_{q \in \cP_S(x)} q^{\nu_{x,q}},
\end{equation}
which implies that
\begin{equation}
\sum_{p \in S(x)} \log|a_f(p)| = \sum_{q \in \cP_S(x)} \nu_{x,q} \log q.
\end{equation}
As before, we have
\begin{equation}
	\nu_{x,q} ~\leq~ \sum_{1 \leq m \leq \frac{\log(2 x^{(k-1)/2})}{\log q}}  \pi_{f}^{*}(x, q^m).
\end{equation}
Set 
\begin{equation*}
z=c\frac{x^{1/7}}{(\log x)^{3/7}} \phantom{mm}\text{and}\phantom{mm}
y= c x^{1/14}(\log x)^{2/7}.
\end{equation*}
Also let  $m_0 = \Big[\frac{\log z}{\log q}\Big]$.
 Then applying \thmref{chebGRH} and \lemref{deltal}, we have 
\begin{equation}\label{pif1}
	\begin{split}
		\sum_{1 \leq m \leq m_0}  \pi_{f}^{*}(x, q^m) 
		&= \sum_{1 \leq m \leq m_0} \bigg\{\d(q^m)\pi(x)
		~+~ O\(q^{3m} x^{1/2} \log x \) ~+~ O\(x^{3/4}\)\bigg\}\\
		& = \frac{\pi(x)}{q} ~+~ O\(\frac{\pi(x)}{q^2}\) ~+~ O\(z^3 x^{1/2} \log x\)
		~+~ O\(x^{3/4} \frac{\log z}{\log q}\).
	\end{split}
\end{equation}
Again applying \thmref{chebGRH} and \lemref{deltal}, we get
\begin{equation}\label{pif2}
\begin{split}
	\sum_{m_0 < m \leq \frac{\log(2 x^{(k-1)/2})}{\log q}}  \pi_{f}^{*}(x, q^m) 
	&\leq~ 
	 \pi_{f}^{*}(x, q^{m_0}) \sum_{m_0 \leq m \leq \frac{\log(2 x^{(k-1)/2})}{\log q}} 1\\
	& \ll~ 
	 \(\frac{\pi(x)}{q^{m_0}} ~+~ q^{3m_0} x^{1/2} \log x ~+~ x^{3/4}\)  \frac{\log x}{\log q}\\
	 &\ll~
	  \frac{x}{z}\frac{q}{\log q} ~+~ z^3 x^{1/2}\frac{(\log x)^2}{\log q}
	  ~+~ x^{3/4}  \frac{\log x}{\log q}.
\end{split}
\end{equation}
From \eqref{pif1} and \eqref{pif2}, we get
\begin{equation}\label{pif3}
	\nu_{x,q} ~\leq~
	 \frac{\pi(x)}{q}  ~+~ O\(\frac{\pi(x)}{q^2}  ~+~  \frac{x}{z}\frac{q}{\log q}
	~+~  z^3 x^{1/2}\frac{(\log x)^2}{\log q}   ~+~ x^{3/4}  \frac{\log x}{\log q} \).
\end{equation}
Since $q \in \cP_S(x)$, we have $q \le y$ and so it follows from \eqref{pif3} that
\begin{equation*}
	\begin{split}
	\sum_{q \in \cP_S(x)} \nu_{x,q} \log q 
	&\leq  
	\pi(x) \log y + c_1\(\pi(x) + \frac{x}{z}\frac{y^2}{\log y} + z^3 x^{1/2} (\log x)^2 \frac{y}{\log y}
	~+~ x^{3/4} \log x  \frac{y}{\log y} \),
	\end{split}
\end{equation*}
where $c_1$ is a positive constant depending on $f$. Now we choose $c$ such that
$3000 c_1 c(1 + c^3) < 1$.
Then by substituting the values of $y$ and $z$, we obtain
\begin{equation}\label{uppRH}
		\sum_{q \in \cP_S(x)} \nu_{x,q} \log q < \frac{x}{13} 
\end{equation}
for all $x$  sufficiently large. On the other hand, from the Sato-Tate conjecture, there exists a 
positive real number $M=M(\eta, \e)$ such that
\begin{equation}
	\#\left\{p \leq x : |a_f(p)| ~\geq~ \frac{p^{(k-1)/2}}{M} \right\} > \(1-\frac{\e \eta}{2}\) \pi(x)
\end{equation}
for all sufficiently large $x$. As in the proof of \lemref{lemlogfA}, by using partial 
summation formula, we deduce that
\begin{equation}\label{lowRH}
\sum_{p \in S(x_n)} \log|a_f(p)| ~>~ \eta(1-2\e) \frac{k-1}{2} x_n
\end{equation}
for all sufficiently large $n$.
From \eqref{uppRH} and \eqref{lowRH}, we deduce that
$$
\eta(1-2\e) \frac{k-1}{2} ~\leq~ \frac{1}{13},
$$
a contradiction to \eqref{cep}. Hence we conclude that 
$$
\eta =\limsup_{x \rightarrow \infty} \frac{\#S(x)}{\pi(x)} \leq \frac{2}{13(k-1)}. 
$$
This completes the proof of \thmref{thm3}.

\medskip

\subsection{Proof of \thmref{thm2}}
Without loss of generality, we can assume that $g(x)\log x \rightarrow \infty$ as $x \rightarrow \infty$, 
 by replacing $g(x)$ with $g(x)+\frac{1}{\log\log x}$ for $x \ge 3$ if necessary.

Let $S_g= \{p : a_f(p) \neq 0, ~P(a_f(p)) \leq p^{g(p)}\}$ and 
$S_g(x) = S_g \cap \{p : p \leq x\}$. We will show that
\begin{equation}\label{lim0}
\lim_{x \rightarrow \infty} \frac{\# S_g(x)}{\pi(x)} ~=~ 0.
\end{equation}
Suppose that \eqref{lim0} is not true. Then there exists  an $\eta > 0$ and 
a strictly increasing sequence of natural numbers $(x_n)_{n \geq 1}$ such that
$$
\frac{\#S_g(x_n)}{\pi(x_n)} ~\geq~ \eta,
\phantom{m} \forall ~~ n \in \N.
$$
As before, let 
$$
\cP_g(x) = \left\{ q :  q \text{ divides }  \(\prod_{p \in S_g(x)} a_f(p) \), ~q ~\text{prime} \right\}.
$$ 
Write
\begin{equation}
	\prod_{p \in S_g(x)} |a_f(p)| = \prod_{q \in \cP_g(x)} q^{\nu_{g,x,q}},
\end{equation}
and hence
\begin{equation}
\sum_{p \in S_g(x)} \log|a_f(p)| = \sum_{q \in \cP_g(x)} \nu_{g,x,q} \log q.
\end{equation}
Let $y= x^{g(x)},~ z=y^2$. As before, for all sufficiently large $x$, we have
\begin{equation}\label{uvgx}
\sum_{q \in \cP_g(x)} \nu_{g,x,q} \log q \leq  \pi(x) \log y + O\(\pi(x) + \frac{x}{\log y}\)
 \ll x  g(x) + \frac{x}{\log x}+ \frac{x}{g(x)\log x}
\end{equation}
and
\begin{equation}\label{lvgx}
\sum_{p \in S_g(x_n)} \log|a_f(p)| \gg x_n
\end{equation}
for all sufficiently large $n$.
From \eqref{uvgx} and \eqref{lvgx}, we get a contradiction 
since $g(x_n) \rightarrow 0$ and $g(x_n)\log(x_n) \rightarrow \infty$ as $n \rightarrow \infty$. Thus we conclude 
that $P(a_f(p)) > p^{g(p)}$ for almost all primes.

\medskip

\section{Largest prime factor of $a_f(n)$}

\subsection{Proof of \corref{cafn}}
Let $\e>0$ be a real number and 
$$
\cP =\left\{p ~:~ a_f(p)\neq 0, ~P(a_f(p)) > (\log p)^{1/8} (\log\log p)^{3/8 -\e} \right\}.
$$
By \thmref{thm1}, $\cP$ has density equal to $1$. Let $x$ be sufficiently large and
$ x_0 =  x^{1/(\log\log x)^{\e}}$. We first show that 
$$
T = \left\{n ~:~ \exists ~p \in \cP \text{ with } p ~|~ n, ~ p > n^{1/(\log\log n)^{\e}} \right\}
$$
has natural density equal to $1$.  Set
$$
\sA = \{n : n \leq x\}, \phantom{m} \sP_0 =\{p : p \in \cP,~ p> x_0 \}, 
\phantom{m}z=x^{1/5} \phantom{m} \tx{and} \phantom{m}
\sP_0(z) =\{ p \in \cP_0 :  p < z \}.
$$
Applying Brun's Sieve \cite[Theorem 2.1]{HR} (with $b=1, \lambda=0.27, \kappa=1$), we get
$$
\sS(\sA, \sP_0, z)
= \#\{n \in \sA  : (n, ~ p )=1~ ~\forall p \in \sP_0(z) \} 
~\ll~
x \prod_{ \substack{x_0 < p < z \\ p \in \cP }} \(1-\frac{1}{p}\) 
~\ll~
 \frac{x}{(\log\log x)^{\eta}}
$$
for some $\eta >0$. Since $\{ \sqrt{x} \leq n \leq x : n \not\in T \} \leq \sS(\sA, \sP_0, z)$, we have
$$
\#\{n \leq x ~:~ n \in T  \} \sim x  \phantom{m}\text{as } x \to \infty.
$$ 
We also have
$$
\# \left\{n \leq x : \exists ~p \in \cP \tx{ with } p^2 |n,~ p >  n^{1/(\log\log n)^{\e}} \right\} 
~\leq~
 \sqrt{x}+\sum_{\sqrt{x_0} < p } 
\frac{x}{p^2} =o(x),
$$
by separating the terms for $n \leq \sqrt{x}$ and for $n > \sqrt{x}$.
Hence we conclude that the set 
$$
S =\left\{ n ~:~  \exists ~p \in \cP \tx{ with } p \mid\mid n,~ p > n^{1/(\log\log n)^{\e}} \right\}
$$
has natural density equal to $1$. Here $p \mid\mid n$ means $p \mid n$ but $p^2 \nmid n$.

Now for $n \in S$, if $a_f(n) \neq 0,$ then
\begin{equation}\label{Pnp}
P(a_f(n)) \geq P(a_f(p)) > (\log p)^{1/8} (\log\log p)^{3/8 -\e} > (\log n)^{1/8} (\log\log n)^{3/8 -2\e}
\end{equation}
for all sufficiently large $n \in S$. This completes the proof of the corollary.

\subsection{Proof of \corref{cafnG} }
Let
$$
\cP = \left\{ p ~:~ a_f(p) \neq 0, ~P(a_f(p)) > p^{1/14} (\log p)^{3/14}\right\}.
$$
By \thmref{thm3}, $\cP$ has lower density at least $1-\frac{2}{13(k-1)}$. 
Without loss of generality, we assume that $g(x)\log x \to \infty$ as $x \to \infty$.
By choosing $x_0 = x^{g(x)}$ and applying Brun's sieve as before, 
we can show that the set
$$
S =\left\{ n : \exists ~p \in \cP \tx{ with } p \mid\mid n,~ p > n^{g(n)} \right\}
$$
has natural density equal to $1$. Hence \corref{cafnG} follows by arguing as in \eqref{Pnp}.

\subsection{Proof of \corref{cafnG2}}
As before, let
$$
\cP = \left\{ p : ~a_f(p) \neq 0, ~P(a_f(p)) > p^{1/14} (\log p)^{3/14}\right\}.
$$
By \thmref{thm3}, $\cP$ has lower density at least $1-\frac{2}{13(k-1)}$. Let $x_0 = x^{1/5}$ and 
$$
 R = \left\{n ~:~ (n,p)=1\ \forall p \in \cP \tx{ with } p > n^{1/5} \right\}.
$$
Also let
$$
\sA=\{n : n\leq x\}, 
\phantom{m}
\sP_0=\{p : p \in \cP , ~p > x^{1/5}\} 
\phantom{m}\tx{and} \phantom{m} 
z= x^{1/4.9}.
$$
Applying Brun's sieve, we get
\begin{eqnarray*}
\#\{n \leq x ~:~ n \in R \} 
~\leq~
 \sS(\sA, \sP_0, z) 
&\leq& 
(1+o(1)) x \prod_{\substack{x_0 < p < z \\ p \in \cP}} \(1-\frac{1}{p}\)  \\
&=& 
(1+o(1)) x  ~\cdot \exp \(-\sum_{\substack{x^{\frac{1}{5}} <p < x^{\frac{1}{4.9}} \\ p \in \cP}} \frac{1}{p} \).
\end{eqnarray*}
By using partial summation, for all sufficiently large $x$, we have
$$
\sum_{\substack{x^{\frac{1}{5}} <p < x^{\frac{1}{4.9}} \\ p \in \cP}} \frac{1}{p} 
~\geq~ 
c_0 = \(1-\frac{2}{12.5(k-1)}\) \log\(\frac{5}{4.9}\).
$$
Thus $\#\{n \leq x ~:~ n \in R \}  \leq (e^{-c_0}+o(1)) x$ for all sufficiently large $x$. 
Hence we deduce that the lower natural density of the set
\begin{equation}\label{W5}
S = \{n ~ :~  \exists ~p \in \cP \tx{ with } p \mid\mid n,~ p > n^{1/5}\}
\end{equation}
is at least $1-e^{-c_1}$, where
$$
c_1 = \(1-\frac{1}{6(k-1)}\) \log\(\frac{5}{4.9}\).
$$
Arguing as before, \corref{cafnG2} follows along the lines of \eqref{Pnp}.

\begin{rmk}
The exponent $1/70$ of $n$ in \corref{cafnG2} is not the best possible.
 One can improve it to $\frac{1}{14\a}-\e $, where $\a = 2+\frac{2.01}{e^{0.54}-1} = 4.80723566 \cdots$ 
( and $\frac{1}{14\a} = 0.01485855 \cdots)$. 
 Indeed by choosing $x_0=x^{\frac{1}{\a+2\e}}, ~z=x^{\frac{1}{\a+\e}}$, one can show that
$$
S = \{n ~ :~  \exists ~p \in \cP \tx{ with } p \mid\mid n,~ p > n^{\frac{1}{\a+2\e}}\}
$$
has lower natural density at least $1-e^{-c_2}$, where
$$
 c_2 = \(1-\frac{2}{(13-\e)(k-1)}\) \log\(\frac{\a+2\e}{\a+\e}\).
$$
\end{rmk}

\section{Largest prime factor of Fourier coefficients at prime powers}

\subsection{Proofs of \corref{thpn} and \corref{thpn1} }
Let $f$ be a non-CM normalized cuspidal Hecke eigenform of  
even weight $k \geq 2$ for $\Gamma_0(N)$ 
having integer Fourier coefficients $\{a_f(n) : n \in \N\}$. Let $p$ be a rational prime
such that $p \nmid N$ and $a_f(p) \ne 0$. Then $a_f(p) \mid a_f(p^{2n+1})$ for any $n \geq 1$
follows from the recurrence formula
$$
a_f({p}^{n+1}) = a_f(p) a_f(p^{n}) - p^{k-1} a_f(p^{n-1}).
$$ 
From \lemref{Z0} and the fact that for all $p$ sufficiently large, either
$a_f(p) = 0$ or $a_f(p^n) \ne 0$ for all $n \ge 1$ 
 (see \cite[Lemma 3.2]{BGPS}, \cite[Lemma 2.2]{KRW}, \cite[Lemma 2.5]{MModd}),
 we see that the set 
 $$
 \{ p \leq x : p \nmid N, ~a_f(p^n) \ne 0~ \forall ~n \geq 1\}
 $$ 
 has density one.  
 Now \corref{thpn} and \corref{thpn1} follow by 
 applying \thmref{thm1}, \thmref{thm2} and \thmref{thm3}.

\subsection{Proof of \thmref{thm6}}
Let $f$ be a normalized cuspidal Hecke eigenform of even weight $k \ge 4$, level~$N$
with Fourier coefficients $\{a_f(n): n \in \N \}$. Also let
$\K_f = \Q(\{a_f(n): n \in \N \})$ and $d_f = [\K_f : \Q]$. 
Then for any rational prime $p \nmid N$ and any natural number $n \ge 1$, it is well known that
\begin{equation}\label{multi}
a_f(p^{n+1}) = a_f(p) a_f(p^{n}) - p^{k-1} a_f(p^{n-1}).
\end{equation}
Thus for $n \ge 1$, inductively, we have
\begin{equation}\label{lucp}
a_f(p^{n-1}) = \frac{\a_p^n - \b_p^n}{\a_p-\b_p},
\end{equation}
where $\a_p, \b_p$ are the roots of the polynomial $x^2 - a_f(p)x + p^{k-1}$.
For any natural number $m\ge 3$, let $\zeta_m = e^{\frac{2\pi i}{m}}$ and the polynomials 
$\Phi_m(X,Y),\Psi_m(X,Y) \in \Z[X,Y]$ be given by
\begin{equation*}
\Phi_m(X,Y) = \prod_{\substack{1 \leq j \leq m \\ (j, m)=1}} \(X- \zeta_m^j Y\)~,~
\Psi_m(X,Y) = \prod_{\substack{1 \leq j < m/2 \\ (j, m)=1}} \(X- 4 \cos^2\(\frac{\pi j}{m }\)Y\).
\end{equation*}
Then we have $\Phi_m(X,Y) = \Psi_m((X+Y)^2, XY)$. From \eqref{lucp}, we get
\begin{equation*}
a_f(p^{n-1}) = \prod_{\substack{d \div n \\d>1}} \Phi_d(\a_p, \b_p).
\end{equation*}
From now onwards assume that $n \geq 3$ and let $p$ be a prime 
such that $a_f(p^{n-1}) \neq 0$. 

If $a_f(p)=0$, then applying \eqref{multi}, we see that $n$ must be odd and
$P(a_f(p^{n-1}))=p$. Recall that for any $\a \in \cO_{\K_f} \setminus \{ 0 \}$ which is not a unit, 
$P(\a)$ is the largest prime factor of the absolute norm
$\cN_{\K_f}(\a)$.

Now assume that $a_f(p)\neq 0$.
If $a_f(p)$ and $p$ are not coprime, then there exists a prime ideal 
$\fp$ in $\cO_{\K_f}$ lying above the rational prime $p$ such that 
$\fp \div a_f(p)\cO_{\K_f}$. It then follows from \eqref{multi} that 
$\fp \div a_f(p^{n-1})\cO_{\K_f}$. Thus we have
$$
P(a_f(p^{n-1})) \geq p~.
$$
Now suppose that $a_f(p)$ and $p$ are coprime algebraic integers in $\cO_{\K_f}$.
Set
$$
n_3 = \min\{d \div n : d \geq 3 \}.
$$ 
Then we have
$$
P(a_f(p^{n -1})) ~\geq~  P(\Phi_{n_3}(\a_p, \b_p)) ~=~ P(\Psi_{n_3}(a_f(p)^2, p^{k-1})).
$$
Let  $\lambda_{n_3} = \z_{n_3} + \z_{n_3}^{-1} + 2 = 4\cos^2\(\frac{\pi}{n_3}\)$ and 
$L= \K_f(\lambda_{n_3})$.
We have
$$
P(\Psi_n(a_f(p)^2, p^{k-1})) ~\geq~ P(a_f(p)^2- \lambda_{n_3} p^{k-1}).
$$
Now by applying \thmref{Bg}, there exists a positive constant $c_1(n_3,f)$ 
depending on $n_3$ and $f$ such that
$$
P(a_f(p)^2- \lambda_{n_3} p^{k-1}) ~\geq~ \frac{\log\log p}{16(k-1)d_f \varphi(n_3)}
$$
provided $p > c_1(n_3, f)$.
Thus for all such primes $p$, we have
$$
P(a_f(p)^2- \lambda_{n_3} p^{k-1}) ~\geq~ c(n_3, f) \log\log p
$$
where $c(n_3, f)$ is a positive constant depending on $n_3$ and $f$.
Thus for $n \geq 3$, we have
$$
P(a_f(p^{n-1})) ~\geq~  c(n_3, f) \log\log p~.
$$

\subsection{Largest prime factor of $\Phi_n(A, B)$}

In \cite{St},  Stewart proved that if $\a, \b$ are complex numbers 
such that $(\a+\b)^2$, $\a\b$ are non-zero rational integers and $\a/\b$ is not a root of unity, then
\begin{equation}\label{Phi104}
P(\Phi_n(\a,\b)) > n ~\tx{exp} \(\frac{\log n}{104\log\log n}\)
\end{equation}
for $n> n_0$, where $n_0$ is a positive constant effectively computable in terms of $\w(\a\b)$ 
and the discriminant of the field $\Q(\a/\b)$. The constant $n_0$ was made explicit 
and the dependency of $n_0$ was refined by the first and the second author
along with Hong \cite{BGH} (see also \cite{HH}).

Let $f$ be a normalized cuspidal Hecke eigenform of even weight $k \geq 2$, level $N$ 
having Fourier coefficients $\{a_f(n) : n \in \N\}$. For $p \nmid N$,
let $\a_p, \b_p$ denote the roots of the polynomial $x^2-a_f(p)x+p^{k-1}$ 
and $\gamma_{p} = \a_p / \b_p$. If the Fourier coefficients of $f$ are rational integers, 
then from \eqref{Phi104}, it follows that for any rational prime $p \nmid N$ 
for which $\gamma_{p}$ is not a root of unity, we have
 $$
 P(a_f(p^{n-1})) > n ~\tx{exp}\( \frac{\log n}{104\log\log n}\)
 $$
 for all sufficiently large $n$ depending on $f$ and $p$.
 
Let $a>1$ be any natural number. Murty and S\'{e}guin \cite{MS} proved that if the exponent 
of $p$ in $a^{p-1}-1$ is bounded by a fixed constant for all primes~$p$, then there exists 
a positive constant $C$ depending on $a$ such that
\begin{equation}\label{PW}
P(\Phi_n(a)) ~>~ C \varphi(n)^2, ~ \forall ~n \in \N.
\end{equation}

In this section, we will deduce a number field analogue of \eqref{PW} and 
as a corollary we will derive a conditional lower bound for $P(a_f(p^n))$.

\begin{lem}\label{Pphin}
Let $A, B$ be non-zero coprime algebraic integers such that $A/B$ is not a root of unity. 
If there exists a constant $r$ such that for any prime ideal $\fP$ of the ring of integers 
of $\Q(A/B)$, 
\begin{equation}\label{nupr}
\nu_{\fP}\(\(\frac{A}{B}\)^{\mathcal{N}_0(\fP)-1} -1\) \leq r,
\end{equation}
then we have
\begin{equation*}
P(\Phi_n(A, B)) > \frac{h(A/B)}{13 r d_2} \cdot \frac{\varphi(n)^2}{d_1^{\w(n)+1}}
\end{equation*}
for all sufficiently large $n$ (depending on $A, B$). Here $d_1 =[\Q(A,B) : \Q]$, 
$d_2 = [\Q(A,B): \Q(A/B)]$, $h(\cdot)$ denotes the absolute logarithmic height of an algebraic 
number and $\cN_0$ denotes the absolute norm on $\Q(A/B)$.
\end{lem}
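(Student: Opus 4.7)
Set $K = \Q(A, B)$, $L = \Q(A/B)$, and $P = P(\Phi_n(A,B))$. The plan is to compare two estimates of $\log|\cN_K(\Phi_n(A,B))|$: a lower bound coming from Lemma~\ref{Nphin}, and an upper bound obtained from the sum
\begin{equation*}
\sum_{\fp} \nu_{\fp}(\Phi_n(A,B)) \log \cN_K(\fp)
\end{equation*}
split according to whether $\fp$ is a primitive divisor of $A^n-B^n$ in $\cO_K$ or not.

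The lower bound is immediate: Lemma~\ref{Nphin} combined with $h(A/B)>0$ (see~\eqref{lbh}) yields, for $n$ sufficiently large,
\begin{equation*}
\log|\cN_K(\Phi_n(A, B))| \;\geq\; \tfrac{1}{2}\, h(A/B)\, d_1\, \varphi(n),
\end{equation*}
since the relative error $2^{\w(n)}\log(n+1)/\varphi(n)$ tends to $0$. The non-primitive prime ideals contribute at most $\log|\cN_K(n\cO_K)| = d_1 \log n$ by Lemma~\ref{impr}, which is negligible compared with the main term.

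The crucial step is controlling a primitive divisor $\fp$. Since $(A,B)=1$, $\fp$ divides neither $A$ nor $B$, so $A/B$ is a unit modulo $\fp$ whose multiplicative order equals $n$. Putting $\fP = \fp\cap\cO_L$, the order of $A/B$ modulo $\fP$ is also $n$, hence $n \div \cN_0(\fP)-1$, say $\cN_0(\fP)-1 = nm$. The polynomial identity $x^{nm}-1 = (x^n-1)\bigl(1+x^n+\cdots+x^{n(m-1)}\bigr)$ evaluated at $x=A/B$ (a $\fP$-adic unit) shows that $(A/B)^n-1$ divides $(A/B)^{\cN_0(\fP)-1}-1$ locally at $\fP$, so
\begin{equation*}
\nu_{\fP}\bigl((A/B)^n - 1\bigr) \;\leq\; \nu_{\fP}\bigl((A/B)^{\cN_0(\fP)-1} - 1\bigr) \;\leq\; r.
\end{equation*}
Since $A/B \in L$, we have $\nu_{\fp}((A/B)^n-1) = e(\fp/\fP)\,\nu_{\fP}((A/B)^n-1) \leq d_2\,r$. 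As $\nu_{\fp}(\Phi_n(A,B)) = \nu_{\fp}((A/B)^n-1)$ for primitive $\fp$, we conclude that $\nu_{\fp}(\Phi_n(A,B)) \leq d_2\, r$.

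To finish I count primitive rational primes. Each primitive $\fp$ above a rational prime $p$ has $n\div p^{f_{\fp}}-1$ with $f_{\fp} \leq d_1$, so the multiplicative order of $p$ modulo $n$ is at most $d_1$. By the Chinese remainder theorem, the number of residue classes in $(\Z/n\Z)^{\times}$ satisfying this is bounded by $\sum_{f=1}^{d_1} f^{\w(n)} \leq d_1^{\w(n)+1}$. The Brun--Titchmarsh estimate~\eqref{BT} applied in each such class (valid once $P\geq n^2$, verified \emph{a posteriori}) gives
\begin{equation*}
\sum_{p\leq P,\ p\ \text{primitive}} \log p \;\ll\; \frac{d_1^{\w(n)+1}\, P}{\varphi(n)}.
\end{equation*}
Combined with $\sum_{\fp\mid p} f_{\fp} \leq d_1$, the primitive contribution is at most $C\, r\, d_2\, d_1^{\w(n)+2}\, P/\varphi(n)$ for an explicit constant $C$. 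Comparing with the main-term lower bound and solving for $P$ produces the claimed shape $P > \frac{h(A/B)}{13\,r\,d_2}\cdot \frac{\varphi(n)^2}{d_1^{\w(n)+1}}$, absorbing a factor of $d_1$ into the constant. The main technical obstacle is the primitive-divisor valuation bound $\nu_{\fp}(\Phi_n(A,B))\leq d_2 r$, which requires care in transferring the hypothesis from $\fP\subset\cO_L$ to $\fp\subset\cO_K$; the residue-class count and Brun--Titchmarsh application are routine but must be tracked carefully to get the correct dependence on $d_1$ and $\w(n)$.
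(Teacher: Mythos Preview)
Your outline follows essentially the paper's proof: the lower bound comes from Lemma~\ref{Nphin}, non-primitive primes are absorbed into $n\cO_K$ via Lemma~\ref{impr}, the key valuation bound $\nu_\fp(\Phi_n(A,B))\leq d_2 r$ for primitive $\fp$ is established by transferring the hypothesis from $\fP\subset\cO_L$ to $\fp\subset\cO_K$ (your divisibility argument via $(A/B)^n-1\mid (A/B)^{\cN_0(\fP)-1}-1$ is a streamlined version of the paper's binomial expansion, which actually proves equality), and the finish is residue-class counting modulo $n$ plus Brun--Titchmarsh. The one point to tighten is the endgame: your ``verified \emph{a posteriori}'' assumption $P\geq n^2$ is circular and the direct comparison does not deliver the constant $13$---the paper instead assumes $P\leq M$ for contradiction and uses that $\log M=(2+o(1))\log n$, so $\log M/\log(M/n)\to 2$, which makes $6\cdot 2/13<1$ the decisive inequality (also, the count of solutions of $x^u\equiv 1\pmod n$ should be $2u^{\w(n)}$, not $u^{\w(n)}$, because of the $2$-part of $(\Z/n\Z)^\times$).
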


\begin{proof}
Let $K=\Q(A, B)$, $K_0=\Q(A/B)$. Also let $\cO_K$ (resp. $\cO_{K_0}$) be the ring 
of integers of $K$ (resp. $K_0$). Further $\cN_K, ~\cN_0$ denote the 
absolute norms on $K$ and $K_0$ respectively.
Using \lemref{impr}, for all sufficiently large $n$, we have the following factorization of $\Phi_n(A,B)\cO_K$,
\begin{equation}\label{fPhi}
\Phi_n(A,B)\cO_K = \fn \prod_{\substack{\fp | \Phi_n(A,B)\cO_K \\ \fp \nmid n\cO_K}} \fp^{\nu_{n,\fp}},
\end{equation}
where $\fn$ is an integral ideal of $K$ such that $\fn \div n\cO_K$ and 
$\fp \subset \cO_K$ varies over primitive divisors of $A^n-B^n$. Here 
$\nu_{n,\fp} = \nu_{\fp}(\Phi_n(A,B))$. Then we have 
\begin{equation*}
\cN_K(\fp) \equiv 1 (\mod n) \phantom{mm}\text{and}\phantom{mm} \nu_{n,\fp} =\nu_{\fp}(A^n-B^n).
\end{equation*}
We claim that $\nu_{\fp}(A^n-B^n) =\nu_{\fp}(A^{\cN_K(\fp)-1} - B^{\cN_K(\fp)-1})$. 

Let $R= \frac{\cN_K(\fp)-1}{n}$. Write $A^{\cN_K(\fp)-1} = \(B^n + (A^n-B^n)\)^R$ 
and then expand the right hand side by using the Binomial theorem, we get
\begin{equation}
A^{\cN_K(\fp)-1} = B^{\cN_K(\fp)-1} + \binom{R}{1} B^{n(R-1)} (A^n-B^n)
+ \cdots + \binom{R}{R} (A^n-B^n)^R.
\end{equation}
Hence
\begin{equation}
\frac{A^{\cN_K(\fp)-1} - B^{\cN_K(\fp)-1}}{A^n-B^n} = R B^{n(R-1)}
+ \binom{R}{2} B^{n(R-2)}(A^n-B^n)+ \cdots +  (A^n-B^n)^{R-1}.
\end{equation}
This implies that
\begin{equation*}
	\nu_{\fp}\(\frac{A^{\cN_K(\fp)-1} - B^{\cN_K(\fp)-1}}{A^n-B^n}\)=0,
\end{equation*}
since $\nu_{\fp}(RB)=0$ and $\nu_{\fp}(A^n-B^n) \geq 1$. This proves our claim.
We also note that
\begin{equation*}
\nu_{n,\fp} = \nu_{\fp} \(\(\frac{A}{B}\)^{\cN_K(\fp)-1}-1\).
\end{equation*}
Let $\fP= \fp \cap \cO_{K_0}$ and $s,t$ denote the ramification index and 
inertia degree of $\fp$ over $\fP$. Then we have $\cN_K(\fp) = \cN_0(\fP)^t$ and
\begin{equation*}
\nu_{n,\fp} = \nu_{\fp} \(\(\frac{A}{B}\)^{\cN_0(\fP)^t-1}-1\) = s~ \nu_{\fP} \(\(\frac{A}{B}\)^{\cN_0(\fP)^t-1}-1\).
\end{equation*}
 Arguing as in the previous claim, we can show that
\begin{equation*}
\nu_{\fP} \(\(\frac{A}{B}\)^{\cN_0(\fP)^t-1}-1\)=\nu_{\fP} \(\(\frac{A}{B}\)^{\cN_0(\fP)-1}-1\).
\end{equation*}
Thus we get
\begin{equation*}
\nu_{n,\fp} = s~ \nu_{\fP} \(\(\frac{A}{B}\)^{\cN_0(\fP)-1}-1\) ~\leq~ d_2~ \nu_{\fP} \(\(\frac{A}{B}\)^{\cN_0(\fP)-1}-1\),
\end{equation*}
where $d_2 = [K:K_0]$. By our assumption,
\begin{equation*}
\nu_{\fP} \(\(\frac{A}{B}\)^{\cN_0(\fP)-1}-1\) \leq r.
\end{equation*}
Hence we have $\nu_{n,\fp} \leq  rd_2$. Let $d_1 = [K:\Q]$ and 
\begin{equation*}
	M = \frac{h(A/B)}{13r d_2} \cdot \frac{\varphi(n)^2}{d_1^{\w(n)+1}}.
\end{equation*}
Consider the set
$$
S = \{ n :   P(\Phi_n(A,B)) \leq M\}.
$$
Suppose that $S$ is infinite. For any $n \in S$, applying \eqref{fPhi}, we have
\begin{equation}\label{N1}
\log|\cN_K(\Phi_n(A,B))|  \leq d_1\log n  ~+~  r d_2 \sum_{p \leq M}
\sum_{\substack{\fp \div p\cO_K\\ \cN_K(\fp) \equiv 1 (\mod n)}} \log \cN_K(\fp).
\end{equation}
The second sum on the right hand side can be written as
\begin{align}
\nonumber
\sum_{p \leq M}\sum_{\substack{\fp \div p\cO_K\\ \cN_K(\fp) \equiv 1 (\mod n)}} \log \cN_K(\fp) 
& ~=~
\sum_{p \leq M} \sum_{u=1}^{d_1} 
\sum_{\substack{\fp  \\~ \cN_K(\fp)=p^u\\ \cN_K(\fp) \equiv 1 (\mod n)}} \log \cN_K(\fp)\\ \nonumber
&~=~  \sum_{u=1}^{d_1} 
\sum_{\substack{p \leq M\\ p^u \equiv 1 (\mod n)}} \log(p^u) \sum_{\substack{\fp \\ \cN_K(\fp)=p^u}} 1 \\ \label{N2}
&~\leq~
  \sum_{u=1}^{d_1}  
  \sum_{\substack{p \leq M\\ p^u \equiv 1 (\mod n)}} d_1\log p.
\end{align}
We know that the congruence
\begin{equation}\label{cong}
X^u \equiv 1 (\mod n)
\end{equation}
has at most $2u^{\w(n)}$ solutions modulo $n$.
For all sufficiently large $n \in S$  (depending on $A, B$) and for any such solution $a$ modulo $n$, 
applying Brun-Titchmarsh inequality \eqref{BT}, 
we have
\begin{equation}
\begin{split}
 \sum_{\substack{p \leq M\\ p \equiv a (\mod n)}} \log p 
 ~\leq~ \frac{3M\log M}{\varphi(n) \log (M/n)}.
\end{split}
\end{equation}
Hence
\begin{equation}\label{N3}
 \sum_{\substack{p \leq M\\ p^u \equiv 1 (\mod n)}} 
 ~\log p~ 
 ~\leq~
  2u^{\w(n)} \frac{3M\log M}{\varphi(n) \log (M/n)}.
\end{equation}
Thus from \eqref{N1}, \eqref{N2} and \eqref{N3}, we obtain
\begin{equation}
\log|\cN_K(\Phi_n(A,B))| ~\leq~  d_1 \log n + 6  r d_2 d_1^{\w(n)+2} \frac{M\log M}{\varphi(n) \log (M/n)}.
\end{equation}
Now using \lemref{Nphin}, we get
\begin{equation}\label{c}
 h(A/B) d_1 \varphi(n) (1 + o(1)) ~\leq~ d_1 \log n + \frac{6}{13}d_1h(A/B) \varphi(n) \frac{\log M}{\log (M/n)}.
\end{equation}
Since $\frac{\log M}{\log (M/n)}$ tends to $2$ as $n \in S$ tends to infinity, we get
a contradiction for sufficiently large~$n$. This implies that
\begin{equation*}
 P(\Phi_n(A,B)) ~>~ \frac{h(A/B)}{13r d_2} \cdot \frac{\varphi(n)^2}{d_1^{\w(n)+1}}
\end{equation*}
for all sufficiently large $n$.
\end{proof}

\begin{rmk}
Let the notations be as in \lemref{Pphin}. We define 
$$
\widetilde{P}_K(\Phi_n(A,B)) = \max\{ \cN_K(\fp) : \fp \subset \cO_K,~\fp ~|~\Phi_n(A,B)\cO_K\}.
$$
Under the assumption \eqref{nupr}, arguing as in the proof of \lemref{Pphin}, we can deduce that
\begin{equation}
\widetilde{P}_K(\Phi_n(A,B)) > \frac{h(A/B)}{7 r d_2} \varphi(n)^2
\end{equation}
for all sufficiently large $n$ depending on $A$ and $B$.
\end{rmk}

\subsection{Proof of \thmref{pafp}}
Let $f$ be a normalized cuspidal Hecke eigenform of even weight $k \geq 2$ and level $N$ with
Fourier coefficients $\{a_f(n) : n \in \N\}$. As before, for any rational prime $p \nmid N$ and 
natural number $n \ge 1$, we have
\begin{equation}\label{luc}
a_f(p^{n-1}) = \frac{\a_p^n - \b_p^n}{\a_p-\b_p},
\end{equation}
where $\a_p, \b_p$ are the roots the polynomial $x^2 - a_f(p)x + p^{k-1}$.
Let $\gamma_p = \frac{\alpha_p}{\beta_p}$.

First suppose that Fourier coefficients of $f$ are rational integers, $p > 3, ~p \nmid N$
and $\gamma_p$ is not a root of unity.
As before, let us define $\nu_{f,p} = \nu_p(a_f(p))$. 
By Deligne's bound, we have $\nu_{f,p} \leq k/2-1$. Set
 
\begin{equation*}
A_p = \frac{\a_p}{p^{\nu_{f,p}}} \phantom{mm}\text{and}\phantom{mm} B_p = \frac{\b_p}{p^{\nu_{f,p}}}.
\end{equation*}
Then $A_p, B_p$ are roots of the polynomial 
$x^2-a_f(p) p^{-\nu_{f,p}}x  +  p^{k-1-2\nu_{f,p}}$ in $\Z[x]$ and $(A_p, B_p)=1$. Also we have 
$$
\Q(A_p,B_p)=\Q(\a_p)=\Q(\gamma_{p}), \phantom{m} 
[\Q(\a_p) :\Q]=2 \phantom{m}
\text{and} \phantom{m} 
h(\gamma_{p}) = \(\frac{k-1}{2}-\nu_{f,p}\)\log p.
$$
From \eqref{luc}, for any integer $n>1$, we have
\begin{equation}
a_f(p^{n-1}) 
= p^{(n-1)\nu_{f,p}}\frac{A_p^n - B_p^n}{A_p-B_p}
=  p^{(n-1)\nu_{f,p}} \prod_{\substack{t | n \\ t>1}} \Phi_t(A_p, B_p).
\end{equation}
Hence we get $P(a_f(p^{n-1})) \geq P(\Phi_n(A_p, B_p))$. Applying \lemref{Pphin},  we have
\begin{equation}
P(\Phi_n(A_p,B_p)) ~>~ \frac{(k-1-2\nu_{f,p})\log p}{52r} \cdot \frac{\varphi(n)^2}{2^{\w(n)}}
\end{equation}
for all sufficiently large $n$ (depending on $f,p$).

Now let us consider the case when Fourier coefficients are not necessarily rational integers. As before,
let $\K_f= \Q(\{a_f(n) : n \in \N\}), d_f = [\K_f : \Q]$. Also let $h_{f,p}$ be the class number of $\Q(\a_p)$. 
There exists an extension $\L_{f,p}$ of $\Q(\a_p)$ of 
degree at most $h_{f,p}$ over $\Q(\a_p)$ such that the ideal $(\a_p, \b_p)$
 is principal in $\L_{f,p}$. 
 Also let $(\a_p, \b_p)=(\eta_p)$ in $\L_{f,p}$. Set
\begin{equation*}
A_p = \frac{\a_p}{\eta_p} \phantom{mm}\text{and}\phantom{mm} B_p = \frac{\b_p}{\eta_p}.
\end{equation*}
Then we have $(A_p, B_p)=1$. For any integer $n>1$, we have
\begin{equation}
a_f(p^{n-1}) = \eta_{p}^{n-1} \frac{A_p^n - B_p^n}{A_p-B_p}= \eta_{p}^{n-1} \prod_{\substack{t | n\\ t>1}} \Phi_t(A_p, B_p).
\end{equation}
Hence we get $P(a_f(p^{n-1})) \geq P(\Phi_n(A_p,B_p))$. Using \lemref{Pphin},  we get
\begin{equation}
 P(\Phi_n(A_p,B_p)) > \frac{h(\gamma_{p})}{13 r d_2} \cdot \frac{\varphi(n)^2}{d_1^{\w(n)+1}},
\end{equation}
for all sufficiently large $n$ (depending on $f,p$). Here $d_1 = [\Q(A_p, B_p) : \Q] \leq 2d_f h_{f,p}$ 
and $d_2 = [\Q(A_p, B_p) : \Q(\gamma_{p})] \leq 2 h_{f,p}$. Thus we get
\begin{equation*}
P(\Phi_n(A_p,B_p)) ~>~  \frac{h(\gamma_{p})}{26 r h_{f,p} } 
\cdot  \frac{\varphi(n)^2}{(2d_f h_{f,p})^{\w(n)+1}}
\end{equation*}
for all sufficiently large $n$ (depending on $f,p$).

\section*{Acknowledgments}
The authors would like to thank Ram Murty, Purusottam Rath and the referee
for valuable suggestions. The authors would also like to thank SPARC project 445 for partial
financial support and Indo French program in Mathematics (IFPM). 
The second and the third author would also like to 
acknowledge the support of DAE number theory plan project.

\end{document}